\newtheorem{theorem}{Theorem}
\newtheorem{lemma}{Lemma}
\newtheorem{corollary}{Corollary}
\begin{document}

\title{Critical Pebbling Numbers of Graphs}

\author{Courtney R. Gibbons \\
Mathematics Department \\ Hamilton College \\ 198 College Hill Road \\ Clinton, NY 13323 \\ crgibbon@hamilton.edu \and Joshua D. Laison \\ Mathematics Department \\ Willamette University\\ 900 State St., Salem, OR 97301 \\ jlaison@willamette.edu \and Erick J. Paul \\ Beckman Institute for Advanced Science and Technology \\ University of Illinois at Urbana-Champaign \\ 405 N Mathews Ave. \\ Urbana, IL 61801 \\ ejpaul@illinois.edu}

\maketitle

\begin{abstract}
We define three new pebbling parameters of a connected graph $G$,
the \textit{$r$-}, \textit{$g$-}, and \textit{$u$-critical
pebbling numbers}.  Together with the pebbling number, the optimal pebbling number, the number
of vertices $n$ and the diameter $d$ of the graph, this yields 7 graph parameters.  We determine the relationships between these parameters.  We investigate properties of the $r$-critical pebbling number, and distinguish between greedy graphs, thrifty graphs, and graphs for which the $r$-critical pebbling number is $2^d$.
\end{abstract}

\noindent \textbf{Keywords}: pebbling, optimal pebbling number, critical pebbling number

\noindent \textbf{Mathematics Subject Classification}: 05C57, 05C75

\section{Pebbling Numbers}

Let $G$ be a connected graph.  A \textit{pebbling distribution}
(or simply \textit{distribution}) $D$ on $G$ is a function which
assigns to each vertex of $G$ a non-negative integer number of pebbles.  If $D$
is a distribution on a graph $G$ and $a$ is a vertex of $G$, we
denote by $D(a)$ the number of pebbles on $a$ in the distribution
$D$.  The \textit{size} of the distribution $D$ is the number of
pebbles in $D$, $|D|=\sum_{a \in V(G)} D(a)$.

A \textit{pebbling step} $[a,b]$ is an operation which takes the
distribution $D$, removes two pebbles from the vertex $a$, and
adds one pebble at the adjacent vertex $b$.  A distribution $D$ is
\textit{$r$-solvable} if there exists a sequence of pebbling steps
starting with $D$ and ending with at least one pebble on the
vertex $r$, and \textit{solvable} if $D$ is $r$-solvable for all
$r$.  We call such a sequence a \textit{solution} of $D$.  A distribution $D$ is \textit{$r$-unsolvable} if it is not
$r$-solvable, and \textit{unsolvable} if there is some vertex $r$ for which $D$ is not $r$-solvable.

A \textit{rooted distribution} is a distribution which also
fixes the vertex $r$ (the \textit{root vertex}).
Note that for a rooted distribution, the terms solvable and
$r$-solvable are interchangeable.  In general, any statement about a distribution $D$
can be applied to a corresponding rooted distribution as well.  For emphasis, we say
that an un-rooted distribution is a \textit{global} distribution.

The \textit{pebbling number} $p(G)$ is the minimum number such
that any distribution on $G$ with $p(G)$ pebbles is solvable
\cite{Hurlbert99}. For example, since the first distribution on
the graph $C_7$ in Figure \ref{C7} is unsolvable, $p(C_7)>10$.  In
fact, $p(C_7)=11$ \cite{Hurlbert99}.

\begin{figure}[!ht]
\begin{center}
\includegraphics[width=\textwidth]{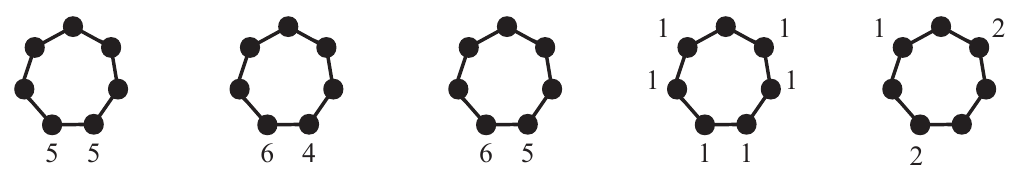}

\caption{Five pebbling distributions on the graph $C_7$.  The second and third distributions are solvable.  The rest are unsolvable.}
\label{C7}
\end{center}
\end{figure}

Pebbling on graphs can be used to model the distribution of resources with loss.  For example, we might be shipping water across roadways, and the water evaporates as it travels; or we might be sending heated water through pipes, and the water cools as it travels; or we might be sending cell phone signals from towers, and the signal weakens as it travels.  The pebbling number of the distribution network is then a measure of how much of the resource we need to have one unit arrive at our destination.

The pebbling number can also be thought of in the context of a \textit{pebbling game} on the graph $G$ that we play against an opponent.  First the  opponent distributes some pebbles on the vertices of $G$, and picks a root vertex.  Then we make pebbling moves to put at least one pebble on the root.  The pebbling number $p(G)$ is the smallest number of pebbles with which we can always win the pebbling game.

The pebbling numbers of several families of graphs have been found, such as trees \cite{Bunde08,Chung89} and grid graphs \cite{Clarke97}, and significant progress has been made on diameter $k$ graphs \cite{Bukh06,Postle14,Postle13}.  Finding the pebbling number of an arbitrary connected graph is computationally difficult, but can be done in polynomial time for several additional families of graphs \cite{Bekmetjev09,Cusack12,Herscovici13,Lewis14}.  Much research has focused on the pebbling number of a product of graphs, and in particular on Graham's Conjecture, which states that $p(G \times H) \leq p(G) \times p(H)$ \cite{Feng01,Herscovici03,Herscovici98}.

In addition, many variations of pebbling numbers have been defined, including the optimal pebbling number \cite{Bunde08,Herscovici11}, the $t$-pebbling number \cite{Gao13,Herscovici13}, the cover pebbling number \cite{Godbole09}, the domination cover pebbling number \cite{Gardner08}, and the weighted pebbling number \cite{Jones14}.  These vary the rules for what constitutes a pebbling step, what constitutes a solved distribution of pebbles, or the turns in the pebbling game. For example, the cover pebbling number requires at least one pebble on each vertex of $G$ in a solved distribution, and in the optimal pebbling game we distribute the pebbles on $G$, then the  opponent picks the root.  Recently Hurlbert established a general pebbling framework that includes several of these variations \cite{Hurlbert13}.

In this paper we introduce three new pebbling parameters, the $r$-, $g$-, and $u$-critical pebbling numbers.  Like the optimal pebbling number, these variations can all be thought of as variations in the rules of the pebbling game, as we outline below.  We first give more formal definitions.

A rooted distribution $D$ is \textit{minimally $r$-solvable} if
$D$ is $r$-solvable but the removal of any pebble makes $D$ not
$r$-solvable.  A rooted distribution $D$ is \textit{maximally
$r$-unsolvable} if $D$ is not $r$-solvable but the addition of any
pebble makes $D$ $r$-solvable. A global distribution $D$ is
\textit{minimally solvable} if $D$ is solvable but the removal of
any pebble makes $D$ unsolvable.  A global distribution $D$ is
\textit{maximally unsolvable} if $D$ is unsolvable but the
addition of any pebble makes $D$ solvable.

For example, the first distribution in Figure \ref{K23} is
minimally solvable as a global distribution, since the deletion of
any pebble makes it unsolvable to some root.  However, it is not
minimally $r$-solvable for any choice of a root $r$, since once a
root is selected four pebbles can be deleted from the distribution
while keeping it solvable.

\begin{figure}[ht!]
\begin{center}
\includegraphics[width=2in]{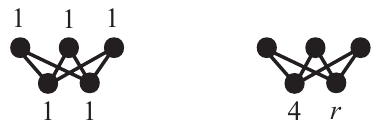}

\caption{Two pebbling distributions on the graph $K_{2,3}$. The first distribution is minimally solvable, but not minimally $r$-solvable for any choice of a root $r$.  The second distribution is both minimally solvable and minimally $r$-solvable.}
\label{K23}
\end{center}
\end{figure}

The eight combinations of largest or smallest, solvable or
unsolvable, and rooted or global distributions yield the following
five pebbling-related parameters on a graph. Two pairs of these
combinations yield the same parameter, and one combination turns
out to be trivial, as noted below.  Given any two of the remaining parameters, there is a graph for which they differ,
as shown in Table 1.

\begin{center}
\begin{tabular}{|c|c|c|c|}
  \hline
   & $K_5$ & $K_{2,3}$ & $C_7$ \\
  \hline
  $p(G)$ & 5 & 5 & 11 \\
  $c_{g}(G)$ & 5 & 5 & 10 \\ \hline
  $c_{r}(G)$ & 2 & 4 & 10 \\
  $2^d$ & 2 & 4 & 8 \\ \hline
  $n$ & 5 & 5 & 7 \\
  $c_{u}(G)$ & 5 & 4 & 7 \\ \hline
  $o(G)$ & 2 & 3 & 5 \\
  \hline
\end{tabular}
\medskip

\begin{small}Table 1. Some pebbling numbers.  New notation is defined below; the parameters are listed in decreasing order based on Figure~\ref{parameter_lattice}. \end{small}
\end{center}

\begin{itemize}

\item The \textit{pebbling number} $p(G)$ is one greater than
    the largest size of a maximally unsolvable global
    distribution on $G$. Equivalently, $p(G)$ is one greater
    than the largest size of a maximally $r$-unsolvable rooted
    distribution on $G$ for any $r$.

For example, the first distribution in Figure \ref{C7} is
maximally unsolvable, since the addition of any one pebble
results in a solvable distribution.  Since there are no
maximally unsolvable distributions on $C_7$ with 11 pebbles,
$p(C_7)=11$.

\item The \textit{$gu$-critical pebbling number} $c_{gu}(G)$
    is one greater than the smallest size of a maximally
    unsolvable global distribution on $G$.  The
    \textit{$ru$-critical pebbling number} $c_{ru}(G)$ is one
    greater than the smallest size of a maximally
    $r$-unsolvable rooted distribution on $G$ for any $r$.
    These two parameters are equal, as proven in Lemma
    \ref{unsolvable_equivalence}.  Consequently, we define
    $c_u(G)=c_{ru}(G)=c_{gu}(G)$.

For example, the fourth distribution in Figure \ref{C7} is
maximally unsolvable, and there are no maximally unsolvable
distributions on $C_7$ with fewer than 6 pebbles.  So
$c_u(C_7)=7$.

\item The \textit{$g$-critical pebbling number} $c_{g}(G)$ is
    the largest size of a minimally solvable global
    distribution on $G$.

For example, the second distribution in Figure \ref{C7} is
minimally solvable, since the deletion of any pebble makes it
unsolvable.  Since there are no minimally solvable
distributions on $C_7$ with greater than 10 pebbles,
$c_g(C_7)=10$.  In particular, the third distribution in
Figure \ref{C7} is not minimally solvable, since removing a
pebble from the vertex with 5 pebbles results in a solvable
distribution.

\item The \textit{$r$-critical pebbling number} $c_{r}(G)$ is
    the largest size of a minimally $r$-solvable rooted
    distribution on $G$ for any $r$.  If a minimally
    $r$-solvable rooted distribution on $G$ has $c_r(G)$
    pebbles, then we call it an \textit{$r$-ceiling}
    distribution.

For example, the second distribution in Figure \ref{K23} is
minimally $r$-solvable.  Since there is no minimally
$r$-solvable rooted distribution on $K_{2,3}$ with greater
than 4 pebbles, $c_r(K_{2,3})=4$.  In particular, as discussed
above, the first distribution in Figure \ref{K23} is not
minimally $r$-solvable for any $r$.

\item The \textit{optimal pebbling number} $o(G)$ is the
    smallest size of a minimally solvable global distribution
    on $G$ \cite{Pachter95}.

For example, the fifth distribution in Figure \ref{C7} is
minimally solvable, and there are no minimally solvable
distributions on $C_7$ with fewer than 5 pebbles.  So
$o(C_7)=5$.

\item The smallest size of a minimally $r$-solvable
    distribution on $G$ is 1 for any connected graph $G$, so we do not
    consider it.

\end{itemize}

Just as the classical pebbling number can be used to model the distribution of resources with loss, critical pebbling numbers might be used to model the spread of harmful substances with loss.  For example, suppose the pebbles represent radioactive waste and the root vertex represents a city.  Our goal is to place the waste far enough away from the city so the residents don't feel the harmful effects.  The largest size of an unsolvable distribution rooted at $r$, which is one less than the $r$-critical pebbling number of the associated graph, is a measure of the largest quantity of waste we can dispose of without harm, and the maximum $r$-insufficient distribution itself is a map of where to dump it.

As discussed above, these pebbling parameters can also be defined in terms of pebbling games on $G$.  Here are the rules for the pebbling games which yield the $g$-critical, $u$-critical, and $r$-critical pebbling numbers.
\bigskip

\noindent \textbf{The $g$-critical Pebbling Game:}

\textbf{Round 1.}  First we place some pebbles on the vertices of $G$.  Then the opponent picks a root vertex.  Then we reach the root with a sequence of pebbling moves.

\textbf{Round 2.} The opponent resets the original distribution, and removes any one pebble.  Then we pick a (possibly different) root vertex.  Then the opponent fails to reach the root with a sequence of pebbling moves.

We win the $g$-critical pebbling game if we win both rounds of the game.  The $g$-critical pebbling number $c_g(G)$ is the largest number of pebbles with which we can always win the $g$-critical pebbling game.
\bigskip

\noindent \textbf{The $r$-critical Pebbling Game:}

\textbf{Round 1.}  First the  opponent picks a root vertex.  Then we place some pebbles on the vertices of $G$.  Then we reach the root with a sequence of pebbling moves.

\textbf{Round 2.} The opponent resets the original distribution, and removes any one pebble.  Then the opponent fails to reach the root with a sequence of pebbling moves.

We win the $r$-critical pebbling game if we win both rounds of the game.  The $r$-critical pebbling number $c_r(G)$ is the largest number of pebbles with which we can always win the $r$-critical pebbling game.
\bigskip

\noindent \textbf{The $u$-critical Pebbling Game:}

\textbf{Round 1.}  First we place some pebbles on the vertices of $G$, and pick a root vertex.  Then the  opponent fails to reach the root with a sequence of pebbling moves.

\textbf{Round 2.} The opponent resets the original distribution, picks a root vertex, and adds one pebble on any vertex. Then we reach the root with a sequence of pebbling moves.

We win the $u$-critical pebbling game if we win both rounds of the game.  The $u$-critical pebbling number is the smallest number of pebbles, counting the additional pebble added in round 2, with which we can always win the $u$-critical pebbling game.
\bigskip

The following is another helpful way of thinking of these pebbling
parameters.  Consider the set of all global distributions on a
given graph $G$. Given the distributions $D$ and $E$, we say that
$D \leq E$ if $D(a) \leq E(a)$ for all vertices $a$ in $G$. With
this ordering, the set of all distributions on $G$ becomes a
lattice. Also note that if $D \leq E$ then $|D| \leq |E|$.

Now divide the distributions in this lattice into the subset $S$
of solvable distributions and the subset $U$ of unsolvable
distributions, and also consider the set $M$ of maximal unsolvable
distributions and the set $m$ of minimal solvable distributions.
The pebbling number, $g$-critical pebbling number, $u$-critical
pebbling number, and optimal pebbling number can be viewed as
maxima and minima of these subsets. Specifically: \begin{itemize}

\item The pebbling number is one greater than the largest size of
    any distribution in $M$.

\item The $u$-critical pebbling number is one greater than the
    smallest size of any distribution in $M$.

\item The $g$-critical pebbling number is the largest size of any
    distribution in $m$.

\item The optimal pebbling number is the smallest size of any
    distribution in $m$.
\end{itemize}
Said another way, among maxima of unsolvable distributions, $p(G)-1$ is the largest and $c_u(G)-1$ is the smallest.  Among minima of solvable distributions, $c_g(G)$ is the largest and $o(G)$ is the smallest.
%
%

The $r$-, $g$-, and $u$-critical pebbling numbers have not been
previously studied.  We now investigate the relationships between
the five distinct pebbling numbers defined above.

\begin{lemma}For any graph $G$, $c_{ru}(G)=c_{gu}(G)$.
\label{unsolvable_equivalence}
\end{lemma}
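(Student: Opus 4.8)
The plan is to show that the two families of distributions whose minimum sizes define $c_{ru}(G)$ and $c_{gu}(G)$ are in fact the \emph{same} set of pebble placements, so that their smallest sizes, and hence the two parameters, must agree. Concretely, I would prove that a distribution $D$ is maximally unsolvable as a global distribution if and only if $D$ is maximally $r$-unsolvable for some root $r$. Since $|D|$ does not depend on whether a root has been named, this equivalence forces the minimum of $|D|$ over the two families to coincide, which is exactly $c_{gu}(G)=c_{ru}(G)$. One inclusion is immediate: if $D$ is maximally unsolvable globally, then $D$ is unsolvable, so it fails to be $r$-solvable for at least one root $r$, and since adding any single pebble makes $D$ solvable it in particular makes $D$ $r$-solvable; thus $D$ is maximally $r$-unsolvable for that $r$.

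The reverse inclusion is the heart of the matter, and I expect its supporting lemma to be the main obstacle: \emph{if $D$ is maximally $r$-unsolvable, then $D$ is $r'$-solvable for every root $r' \neq r$.} To prove this I would argue by contradiction, assuming $D$ is also $r'$-unsolvable for some $r' \neq r$, so in particular $D(r')=0$. Because $D$ is maximally $r$-unsolvable, the distribution obtained from $D$ by adding one pebble at $r'$ is $r$-solvable, so fix a solution $\sigma$ reaching $r$ from this augmented distribution. If $\sigma$ never removes pebbles from $r'$, then the added pebble merely sits at $r'$ throughout, and deleting it leaves a valid solution of $D$ reaching $r$, contradicting that $D$ is $r$-unsolvable. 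Hence $\sigma$ performs a step $[r',w]$ at some point; at the first such step $r'$ carries at least two pebbles, so since $D(r')=0$ and only one pebble was added, at least one pebble must have been moved onto $r'$ by an earlier step. Every step preceding this first removal leaves the added pebble untouched at $r'$, so those steps use only the pebbles of $D$ and already deliver a pebble to $r'$ using $D$ alone, contradicting that $D$ is $r'$-unsolvable. The delicate point, and where genuine pebbling structure is needed, is precisely this tracking of the added pebble: it is what rules out the ``dominated by another root'' configurations that would break the analogous statement for an arbitrary lattice.

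Granting the lemma, the reverse inclusion would follow quickly. Let $D$ be maximally $r$-unsolvable; by the lemma $D$ is $r'$-solvable for every $r' \neq r$, so $D$ is unsolvable and $r$ is the only root it cannot reach. For any vertex $u$, the distribution obtained from $D$ by adding one pebble at $u$ is $r$-solvable by maximality, and remains $r'$-solvable for every $r' \neq r$ since adding pebbles never destroys solvability. Thus every single-pebble augmentation of $D$ is solvable, so $D$ is maximally unsolvable as a global distribution. Combining the two inclusions shows the two families coincide, and taking minimum sizes gives $c_{ru}(G)=c_{gu}(G)$. The only genuinely pebbling-theoretic ingredient is the lemma; the remainder is bookkeeping with the monotonicity of solvability under the addition of pebbles.
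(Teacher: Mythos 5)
Your proposal is correct and follows essentially the same route as the paper: the decisive step in both is to take the second unreachable root $r'$ (the paper's $s$), add one pebble there, and observe that since $D$ cannot deliver a second pebble to $r'$ the added pebble is inert, so the augmented distribution is still $r$-unsolvable, contradicting maximal $r$-unsolvability. You package this as a standalone lemma (``a maximally $r$-unsolvable distribution is solvable to every other root'') and finish with monotonicity, whereas the paper runs it as a direct contradiction, but the underlying argument is the same.
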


\begin{proof}
By definition, every maximally unsolvable distribution on $G$ is
maximally $r$-unsolvable for some $r$.

Let $r$ be a vertex of $G$.  It suffices to show that every
maximally $r$-unsolvable distribution is maximally unsolvable. Let
$D$ be a maximally $r$-unsolvable distribution on $G$, and assume
by way of contradiction that $D$ is not maximally unsolvable.
Since $D$ is $r$-unsolvable, $D$ is unsolvable.  Since $D$ is not
maximally unsolvable, there exists a vertex $s$ of $G$ such that
$D$ is $s$-unsolvable, and a pebble can be added to $D$ so that
$D$ remains $s$-unsolvable.  Hence, $s \not= r$.

Consider the distribution $E$ obtained from $D$ by adding a pebble
on $s$.  Since $D$ is $s$-unsolvable, no second pebble from $E$
can be moved to $s$.  So any solution of $E$ to $r$ does not use
the pebble on $s$.  But this means that $E$ is not $r$-solvable,
which contradicts the fact that $D$ is maximally $r$-unsolvable.
\end{proof}

\begin{lemma}
Suppose that $G$ is a graph with $n$ vertices and diameter $d$.
Then

\begin{enumerate}[(i)]

\item $o(G) \leq 2^d$,

\item $2^d \leq c_r(G)$,

\item $c_r(G) \leq c_g(G)$,

\item $o(G) \leq c_u(G)$,

\item $c_u(G) \leq n$,

\item $n \leq c_g(G)$, and

\item $c_g(G) \leq p(G).$
\end{enumerate}

\label{optimal_lemma} \label{upper_bound} \label{lower_bound}
\end{lemma}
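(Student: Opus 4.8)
The plan is to prove each of the seven inequalities by exhibiting a single extremal distribution tailored to the two parameters it compares, relying on two standard tools. The first is the \emph{weight function} toward a root $r$, $w_r(D)=\sum_{a\in V(G)}D(a)2^{-\mathrm{dist}(a,r)}$, which is non-increasing under every pebbling step (a step $[a,b]$ changes the weight by $-2\cdot 2^{-\mathrm{dist}(a,r)}+2^{-\mathrm{dist}(b,r)}\le 0$ since $\mathrm{dist}(b,r)\ge\mathrm{dist}(a,r)-1$) and equals $1$ for a single pebble on $r$; hence any $D$ with $w_r(D)<1$ is $r$-unsolvable. The second tool is the pair of \emph{uniform} distributions, one pebble on every vertex and one pebble on every vertex but one, neither of which admits any pebbling move.

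Grouped by technique, six of the parts fall quickly. For (i) and (ii) I would stack pebbles on one vertex: placing $2^d$ pebbles on any $v$ cascades down a shortest path to reach any root, so it is solvable and $o(G)\le 2^d$; placing $2^d$ pebbles on a vertex $u$ at distance $d$ from $r$ is $r$-solvable but, since $(2^d-1)2^{-d}<1$, any deletion makes it $r$-unsolvable, so it is minimally $r$-solvable and $c_r(G)\ge 2^d$. For (vi) and (v) I would use the uniform distributions: one pebble on every vertex is solvable (each root already holds a pebble) but minimally so (deleting the pebble at $v$ kills every move and isolates $v$), giving $c_g(G)\ge n$; one pebble on every vertex but $s$ admits no move so $s$ is unreachable, yet adding a pebble anywhere triggers a cascade reaching $s$, so by Lemma~\ref{unsolvable_equivalence} this size-$(n-1)$ distribution is maximally unsolvable and $c_u(G)\le n$. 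Finally, (iv) and (vii) are one-pebble perturbations tied to the definitions: deleting a pebble from a minimally solvable distribution of size $c_g(G)$ leaves an unsolvable distribution, which has at most $p(G)-1$ pebbles, so $c_g(G)\le p(G)$; and adding a pebble to a smallest maximally unsolvable distribution yields a solvable distribution of size $c_u(G)$, so $o(G)\le c_u(G)$.

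The main obstacle is (iii), $c_r(G)\le c_g(G)$. Its clean half is the observation that a minimally $r$-solvable $D$ which is also globally solvable is automatically minimally solvable, because deleting any pebble makes $D$ $r$-unsolvable and hence unsolvable; such a $D$ then witnesses $c_g(G)\ge|D|$. So it suffices to treat an $r$-ceiling distribution $D$, with $|D|=c_r(G)$, that is \emph{not} globally solvable, say $s$-unsolvable for some $s\ne r$. The difficulty is genuine: one cannot merely enlarge $D$ to a globally solvable $E\ge D$ and reduce back down, since the pebbles added to repair $s$-solvability may render an original pebble of $D$ non-critical and so destroy minimality. My plan is to augment $D$ to global solvability while \emph{preserving criticality}, maintaining the invariant that deleting any single pebble leaves a globally unsolvable distribution, by adding pebbles only along shortest paths to the currently unreachable roots and verifying that no such addition rescues the deletion of a pre-existing pebble. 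Proving that a criticality-preserving sequence of additions reaching global solvability always exists is the crux of the whole lemma.
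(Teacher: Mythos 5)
Parts (i), (ii), (iv), (v), (vi), and (vii) of your proposal match the paper's proof essentially step for step: the same extremal distributions (a stack of $2^d$ pebbles on a vertex at distance $d$ from the root, the all-ones distribution, the distribution with one pebble on every vertex but one) and the same one-pebble perturbation arguments for (iv) and (vii). These six parts are correct, and your weight-function justification that deleting a pebble from the $2^d$-stack destroys $r$-solvability is a fine substitute for the paper's bare assertion.

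The problem is (iii), and you have flagged it yourself: you correctly isolate the easy case (an $r$-ceiling distribution that happens to be globally solvable is automatically minimally solvable) and correctly diagnose why naive augmentation fails, but you leave the augmentation argument as a ``plan'' and explicitly call its existence ``the crux of the whole lemma.'' As written, the proof of (iii) is incomplete. The missing idea is small but essential: when $D$ is not solvable to $s$, add the new pebble \emph{at $s$ itself}. Such a pebble is inert --- it can never participate in any pebbling step, since $s$ would first need a second pebble and no second pebble can ever reach $s$; this remains true after deleting any pebble, because deletion only makes a distribution harder to solve. An inert pebble therefore cannot rescue the deletion of any pre-existing pebble: removing an original pebble still destroys $r$-solvability (by minimal $r$-solvability of $D$), and removing the new pebble destroys $s$-solvability. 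Iterating --- each time adding one pebble at a vertex to which the current distribution is unsolvable --- preserves the invariant that every pebble is critical for solvability to some root, and the process terminates because any distribution with at least $p(G)$ pebbles is solvable. Note that your proposed variant of ``adding pebbles only along shortest paths to the currently unreachable roots'' would not work as stated: a pebble placed at an intermediate vertex of such a path is not inert and can indeed render an original pebble of $D$ redundant, which is exactly the failure mode you were worried about.
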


{\textbf{Proof.}}
\begin{enumerate}[(i)]

\item The proof appears in \cite{Pachter95}.

\item Let $a$ and $b$ be two vertices which are distance $d$
    apart. Let $D$ be the rooted distribution with $a=r$ and
    $D(b)=2^d$. This distribution is minimally $r$-solvable,
    and consequently $2^d \leq c_r(G)$.

\item \noindent \textbf{Case 1: There exists an $r$-ceiling
    distribution $D$ on $G$ which is not solvable.} Suppose
    $D$ is not solvable to the vertex $s$. Consider the
    distribution $D_1$ obtained from $D$ by adding a pebble at
    $s$. Since $D$ is not solvable to $s$, the new pebble
    cannot be used in a solution of $D_1$ to $r$. Hence if we
    remove any pebble in $D_1$, either $D_1$ is no longer
    solvable to $r$, or $D_1$ is no longer solvable to $s$.
    Either $D_1$ is solvable, or $D_1$ is not solvable to some
    vertex $t$. In this second case we form the distribution
    $D_2$ by adding a pebble at $t$ to $D_1$. Continuing in
    this way, we eventually arrive at a minimally solvable
    distribution $E$ which contains all the pebbles in D and
    some additional pebbles.  Since $c_r(G)=|D|$ and $c_g(G)
    \geq |E|$, $c_r(G) \leq c_g(G)$.

\noindent \textbf{Case 2: All $r$-ceiling distributions on $G$
are solvable.}  Every $r$-ceiling distribution is minimally
$r$-solvable, and hence minimally solvable.  Since $c_r(G)$ is
the maximum size of an $r$-ceiling distribution and $c_g(G)$
is the maximum size of a minimally solvable distribution,
$c_r(G) \leq c_g(G)$.

\item Because $c_u(G)$ is one larger than the size of a
    maximally unsolvable distribution, there exists a solvable
    distribution with $c_u(G)$ pebbles.  Since $o(G)$ is the
    size of the smallest solvable distribution on $G$, $o(G)
    \leq c_u(G)$.

\item Any distribution on the graph $G$ with one pebble on all
    but one vertex is maximally unsolvable. Since $c_u(G)$ is
    one greater than the smallest such distribution, $c_u(G)
    \leq n$.

\item The distribution with one pebble on every vertex of $G$
    has $n$ pebbles and is minimally solvable.  Since $c_g(G)$
    is the size of the largest such distribution, $n \leq
    c_g(G)$.

\item By the definition of $p(G)$, every distribution with
    $p(G)$ or more pebbles is solvable.  By the definition of
    $c_g(G)$, there exist distributions with $c_g(G)-1$
    pebbles which are not solvable. \hfill $\square$
\end{enumerate}

Table 1 shows the values of the five pebbling parameters for three
graphs. The table illustrates that the inequalities in Lemma
\ref{upper_bound} can be either strict or not, and that there is
no definite relationship between the remaining pairs of pebbling
parameters.   For instance, $c_r(K_5)<c_u(K_5)$, but
$c_r(C_7)>c_u(C_7)$.  The proof that $p(C_7)=11$ appears in
\cite{Hurlbert99},
 and we prove $c_r(C_7)=10$ in Lemma \ref{cycle} below.  Since the remainder of the
paper focuses on $c_r(G)$, we leave the rest of the values for the
reader to verify.

We summarize the relationships between these seven values in the
lattice in Figure \ref{parameter_lattice}.  In this figure, an
edge indicates that the lower value is less than
or equal to the upper value for all graphs, and a missing edge
indicates that each value may be greater than the other on some
graphs.  Finally, note that for the graph with a single vertex,
all seven values are equal.


\begin{figure}[ht!]
\begin{center}
\includegraphics[width=1.5in]{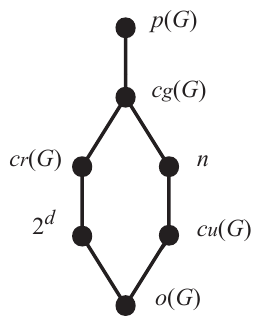}

\caption{The relationships between the five pebbling parameters.}
\label{parameter_lattice}
\end{center}
\end{figure}

\section{The $r$-Critical Pebbling Number}

For the remainder of the paper, we focus on the $r$-critical pebbling number.  We say that a minimally
$r$-solvable rooted distribution is an \textit{$r$-critical
distribution}, so $c_r(G)$ is the maximum size of an $r$-critical
distribution on $G$.  Recall that the $r$-critical distributions
on $G$ with $c_r(G)$ pebbles are called \textit{$r$-ceiling
distributions}.

We say that the rooted distribution $D$ is \textit{$r$-excessive}
if $D$ is $r$-solvable and not $r$-critical, and
\textit{$r$-insufficient} if $D$ is not $r$-solvable. Then the
sets of $r$-insufficient, $r$-critical, and $r$-excessive
distributions on $G$ form a partition of all rooted distributions
on $G$. Note that for an $r$-insufficient distribution $I$, an
$r$-critical distribution $C$, and an $r$-excessive distribution
$E$, we may have $|E|<|C|<|I|$. Examples of three such
distributions are shown in Figure \ref{distributions}.

\begin{figure}[ht!]
\begin{center}
\includegraphics[width=2.5in]{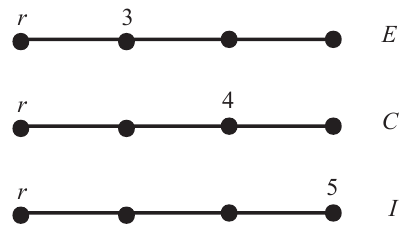}

\caption{Three rooted distributions on the graph $P_4$. From top to bottom, they are $r$-excessive, $r$-critical, and $r$-insufficient.}
\label{distributions}
\end{center}
\end{figure}

Given a distribution $D$ with root $r$, we say that a solution of $D$ is an \textit{$r$-critical solution} if it leaves one pebble
on $r$ and no pebble on any other vertex.

\begin{lemma}A rooted distribution $D$ is $r$-critical if and only if $D$ is $r$-solvable and
all solutions of $D$ are $r$-critical. \label{critical_solution}
\end{lemma}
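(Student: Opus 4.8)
The plan is to prove the two implications separately, using throughout the standard \emph{token} bookkeeping for a solution: regard each pebble as a distinct token, and record, for every pebbling step $[a,b]$, that the single token it creates on $b$ has as its two children the two tokens it consumes on $a$. This turns any solution $S$ of $D$ into a forest of binary trees whose leaves are exactly the pebbles of the initial distribution $D$ and whose roots are exactly the pebbles of the final distribution $F$; in particular the number of trees equals $|F|$ and the number of leaves equals $|D|$. Note that $S$ is $r$-critical precisely when $|F|=1$ (one pebble, on $r$), i.e. when this forest is a single tree rooted at $r$.

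For the direction $(\Leftarrow)$, suppose $D$ is $r$-solvable and every solution of $D$ is $r$-critical, and suppose for contradiction that $D$ is not minimally $r$-solvable. Then some pebble can be deleted, say from a vertex $v$, leaving a distribution $D'$ that is still $r$-solvable; fix a solution $S'$ of $D'$ with final distribution $F'$, so $F'(r)\ge 1$. Since $D$ has one more pebble on $v$ than $D'$ and agrees with $D'$ elsewhere, the move sequence $S'$ is also executable on $D$: replaying it move for move, at every stage the current distribution equals the corresponding distribution of the $D'$-run together with one extra pebble on $v$, because each step still removes two pebbles from its source vertex (which has at least as many available) and the surplus pebble on $v$ is simply carried along unchanged. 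Hence $S'$ is a solution of $D$ whose final distribution has total size $|F'|+1\ge 2$, so it is not $r$-critical, contradicting the hypothesis. Therefore no pebble is removable and $D$ is minimally $r$-solvable.

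For the direction $(\Rightarrow)$, suppose $D$ is minimally $r$-solvable and let $S$ be any solution, with final distribution $F$; the goal is to show $|F|=1$. Since $S$ is a solution, $F$ has at least one token on $r$; call its tree $T_r$. If $S$ were not $r$-critical then $|F|\ge 2$, so the forest of $S$ contains a second tree $T'$, rooted at some other token of $F$. Every tree has at least one leaf, so $T'$ contains a pebble $p$ of $D$ that does not lie in $T_r$. The steps constituting $T_r$, performed in an order consistent with the tree (children before parents), form by themselves a solution that reaches $r$ and never involves $p$. Consequently, deleting $p$ from $D$ leaves a distribution that is still $r$-solvable, contradicting minimality. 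Hence $|F|=1$ and $S$ is $r$-critical.

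The two routine verifications are the monotonicity of move sequences under adding a pebble (used in $(\Leftarrow)$) and the fact that the token forest is well defined and that the steps forming a single tree can be reordered into a genuine solution (used in $(\Rightarrow)$). The latter reordering is the point requiring the most care: one must check that processing the tree $T_r$ in a children-before-parents order always presents each step with its two pebbles available on the source vertex at the moment it is executed, so that the extracted subsequence is a legitimate sequence of pebbling steps on $D$ with $p$ removed.
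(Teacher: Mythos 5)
Your proof is correct and follows essentially the same route as the paper's: the $(\Leftarrow)$ direction is identical (replay a solution of the smaller distribution on $D$ and observe the leftover pebble), and your token-forest argument for $(\Rightarrow)$ is a rigorous formalization of the paper's informal ``trace the extra pebble's ancestry back until you find a deletable pebble of $D$'' step. The only added value is that you make explicit the reordering of the steps of $T_r$ into a legitimate solution avoiding $p$, a detail the paper glosses over with ``we continue in this way.''
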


\begin{proof}
Suppose $D$ is $r$-critical.  Then by definition $D$ is
$r$-solvable. Suppose there exists a solution $S$ of $D$ which is
not $r$-critical. Then $S$ leaves a pebble on the non-root vertex
$a$. If this pebble is unmoved from $D$, we may delete it from $D$
and obtain an $r$-solvable rooted distribution, which contradicts
the fact that $D$ is $r$-critical.  So $S$ must include the
pebbling step $[b,a]$ for some other vertex $b$. Again, if a
pebble on $b$ is unmoved from $D$ until this pebbling step, then
$D$ would not be $r$-critical. We continue in this way. Since $S$
is a finite sequence of pebbling steps, eventually we will find a
pebble in $D$ which may be deleted to obtain an $r$-solvable
rooted distribution. Hence, all solutions of $D$ are $r$-critical.

Conversely, suppose $D$ is $r$-solvable and all solutions of $D$
are $r$-critical.  Let $E$ be a rooted distribution obtained by
removing a pebble from $D$, and suppose that $E$ is $r$-solvable.
Then there exists a solution of $D$ which leaves this pebble
unmoved. So this solution is not $r$-critical, which is a
contradiction. Therefore, $D$ must be $r$-critical.
\end{proof}

\begin{corollary}If $D$ is $r$-critical and $a$ is a vertex of $G$ with degree 1 distinct from
$r$, then $D(a)$ is even. \label{even} \end{corollary}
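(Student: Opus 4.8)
The plan is to let $b$ denote the unique neighbor of $a$ and to exploit that, since $\deg(a)=1$ and $a\neq r$, the vertex $a$ can interact with the rest of $G$ only through the single edge $\{a,b\}$, which is a bridge separating $a$ from $r$. If $D(a)=0$ we are done, so assume $D(a)\geq 1$.

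First I would set up a parity identity. Since $D$ is $r$-critical it is $r$-solvable, so fix a solution $S$. By Lemma \ref{critical_solution} every solution of $D$ is $r$-critical, hence $S$ leaves no pebble on $a$ (because $a\neq r$). In $S$ the only steps that change the pebble count on $a$ are $[a,b]$, which removes two pebbles from $a$, and $[b,a]$, which adds one; let $x$ and $y$ be the numbers of each. Tracking the count on $a$ from its initial value $D(a)$ down to its final value $0$ yields $D(a)=2x-y$, so $D(a)\equiv y \pmod 2$. It therefore suffices to exhibit a single solution of $D$ in which no step $[b,a]$ is used, for then $y=0$ and $D(a)=2x$ is even.

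The key step is to show that $D$ has a solution that never moves a pebble onto $a$. The intuition is that any step $[b,a]$ is purely wasteful: the pebble it deposits on $a$ can leave $a$ only by a later step $[a,b]$ back across the same edge, which returns just one pebble to $b$ while also consuming a second pebble that was sitting on $a$, a strict loss. I would make this precise by taking a solution $S'$ of $D$ that uses the fewest pebbling steps and arguing that if $S'$ contained a step $[b,a]$, one could delete that step together with the later step $[a,b]$ that consumes its deposited pebble, obtaining a shorter sequence that still reaches $r$ and contradicting minimality. Applying the parity identity of the previous paragraph to $S'$ then gives $D(a)=2x$, proving that $D(a)$ is even.

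The main obstacle is this excision argument. The delicate points are that the pebble placed on $a$ by a step $[b,a]$ need not be consumed by the immediately following step, and that removing the paired steps alters the pebble budget at $b$; one must verify that the modified sequence remains executable and still delivers a pebble to $r$. The cleanest route is to use a step-minimal (or size-minimal) solution together with a local exchange argument confined to the bridge $\{a,b\}$, so that no move away from $a$ is affected except through the single interface vertex $b$.
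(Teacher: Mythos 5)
Your proof is correct and rests on the same two ideas as the paper's: pebbles leave the degree-1 vertex $a$ only in pairs across the bridge to its neighbor $b$, and any step $[b,a]$ can be excised together with a matching later $[a,b]$. The paper runs this by contradiction---excising the pair from an arbitrary solution yields a non-$r$-critical solution, violating Lemma~\ref{critical_solution}, while a solution with no $[b,a]$ step strands an odd leftover on $a$---whereas you use step-minimality to first produce a solution with $y=0$; the executability check you flag does go through (pair each $[b,a]$ with the first subsequent $[a,b]$, after which every vertex holds at least as many pebbles as before), and the paper elides this detail exactly as you do.
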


\begin{proof}
Suppose $D(a)$ is odd, and $S$ is a solution of $D$.  If $[b,a]$
is a pebbling step in $S$, then $[a,b]$ must also be a pebbling
step in $S$. Removing both pebbling steps from $S$ results in a
non-$r$-critical solution of $D$, so $D$ is not $r$-critical by
Lemma \ref{critical_solution}. Alternatively, if there are no
pebbling steps in $S$ of the form $[b,a]$, then $S$ leaves at
least one pebble on $a$.  So $S$ is not $r$-critical, and again by
Lemma \ref{critical_solution}, $D$ is not $r$-critical.
\end{proof}


\begin{lemma}
Suppose that $G$ is a graph and $a$ is a vertex of $G$ which is
adjacent to every other vertex of $G$. Then the $r$-critical
distributions on $G$ must have one of the following forms:

\begin{enumerate}[(i)]

\item One pebble on $r$, and no pebble on any other vertex.

\item Two pebbles on $a$, and no pebble on any other vertex, including $r$.

\item Four pebbles on some vertex $b$, and no pebble on any other vertex, including $a$ and $r$.

\item Two pebbles on two vertices $b$ and $c$, and no pebble on any other vertex, including $a$ and $r$.

\item Two pebbles on some vertex $b$, no pebbles on $r$, and less than
    two pebbles on all other vertices, including $a$.

\end{enumerate} \label{cases}

\end{lemma}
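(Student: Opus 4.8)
The plan is to use Lemma \ref{critical_solution}: since $D$ is $r$-critical, $D$ is $r$-solvable and every solution of $D$ leaves exactly one pebble on $r$ and none elsewhere. Because $a$ is adjacent to every vertex, any two vertices are at distance at most $2$, so I will repeatedly exploit the two cheap routes to the root: the move $[a,r]$ when $r\ne a$, and, for any $b\ne a,r$, the two-step route $[b,a],[a,r]$. The argument is a casework on $D(r)$ and $D(a)$; in each case I will exhibit a specific solution and invoke Lemma \ref{critical_solution} to force the shape of $D$.

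First I would dispose of the easy configurations. If $D(r)\ge 1$, then deleting any non-root pebble leaves a pebble on $r$ and hence keeps $D$ solvable, so minimality forces every other entry to be $0$ and $D(r)=1$, giving form (i); this also settles the case $r=a$ with a pebble on $a$. So assume $D(r)=0$ (in particular $D(a)=0$ when $r=a$). If $r\ne a$ and $D(a)\ge 3$, the move $[a,r]$ is a solution leaving at least one pebble on $a$, contradicting Lemma \ref{critical_solution}; hence $D(a)\le 2$, and if $D(a)=2$ the solution $[a,r]$ forces all other entries to vanish, giving form (ii). This reduces everything to the main case $D(r)=0$ and $D(a)\le 1$.

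For the main case I would introduce the greedy solution that, for each $b\ne a,r$, fires $[b,a]$ exactly $\lfloor D(b)/2\rfloor$ times and then fires $[a,r]$. When this reaches $r$, i.e. when $A:=D(a)+\sum_{b\ne a,r}\lfloor D(b)/2\rfloor\ge 2$, Lemma \ref{critical_solution} forces all of its leftovers to be $0$: every remainder $D(b)\bmod 2$ vanishes, so each $D(b)$ is even, and $A-2=0$. Then $D(a)+\tfrac12\sum_b D(b)=2$ with $D(a)\in\{0,1\}$, and a short count of the positive even parts yields exactly forms (iii) (one part equal to $4$), (iv) (two parts equal to $2$), and (v) ($D(a)=1$ together with a single $2$).

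It remains to treat $A\le 1$, which is where the main difficulty lies. Here $\sum_b\lfloor D(b)/2\rfloor\le 1$, so since $D$ is solvable with $D(r)=0$ there must be exactly one vertex $b_0$ with $D(b_0)\ge 2$ (forcing $D(a)=0$), while every other vertex carries at most one pebble; this is precisely the shape of form (v) once I show $D(b_0)=2$. The obstacle is ruling out $D(b_0)=3$ (and any odd surplus): I expect to argue that in every solution the odd pebble on $b_0$ cannot be cleared, since moving it requires first returning a pebble to $b_0$, and a careful accounting of the pebbles entering and leaving $b_0$ shows an uncleared leftover is unavoidable, so by Lemma \ref{critical_solution} such a $D$ is not $r$-critical. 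This parity phenomenon generalizes Corollary \ref{even}, and making it go through for a non-leaf vertex, together with checking that the alternative of routing to $r$ through a neighbor $c\ne a$ leads to the same conclusion, is the step I expect to require the most care.
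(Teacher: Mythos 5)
Your handling of forms (i)--(iv), and of form (v) in the branch $A\ge 2$, is correct and follows essentially the same strategy as the paper: exhibit one explicit solution and let Lemma~\ref{critical_solution} force every leftover pebble to vanish. Your unified greedy solution (fire $[b,a]$ $\lfloor D(b)/2\rfloor$ times, then $[a,r]$) is in fact a cleaner organization of what the paper does case by case. (Do say a word about the degenerate case $r=a$, where the solution is a single step $[b,a]$.)

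The gap is exactly where you predicted it, and the approach you sketch for closing it would fail. You propose to show that \emph{every} solution strands the odd pebble on $b_0$; that premise is false. On $K_4$ with vertices $r,a,b,c$, the distribution $D(b)=3$, $D(c)=1$ admits the solution $[b,c],[c,b],[b,r]$, which ends with one pebble on $r$ and nothing anywhere else, so $b$ is cleared. (The paper's own one-line justification, ``any solution of $D$ will leave at least one pebble on $b$,'' has the same defect.) This $D$ is of course still not $r$-critical: the one-step solution $[b,r]$ leaves pebbles behind, and by Lemma~\ref{critical_solution} a single non-critical solution suffices. That is the weaker, correct target: exhibit \emph{some} non-$r$-critical solution. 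It is immediate when $b_0$ is adjacent to $r$, or when some neighbor $x$ of $b_0$ with $D(x)=1$ is adjacent to $r$ (use $[b_0,r]$ or $[b_0,x],[x,r]$, each leaving a pebble on $b_0$). In the remaining situation, assume $D$ is $r$-critical; then every solution has exactly $|D|-1$ steps, and the count $\mathrm{in}_v=2\,\mathrm{out}_v-D(v)$ summed over all vertices forces $\mathrm{out}_{b_0}=2$, $\mathrm{in}_{b_0}=1$, $\mathrm{out}_{c}=\mathrm{in}_{c}=1$ for each singly-occupied $c$, and no moves at empty vertices. Since at every moment at most one vertex holds two pebbles, the solution is a chain $b_0\to c_{i_1}\to\cdots\to b_0\to c_{j_1}\to\cdots\to r$; deleting the loop that returns to $b_0$ yields a valid solution of $D$ that strands a pebble on $b_0$, contradicting Lemma~\ref{critical_solution}. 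Until something like this is written out, the proof of form (v) is incomplete; note that the naive analogue of Corollary~\ref{even} does not transfer to non-leaf vertices.
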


\begin{proof}
Suppose $D$ is an $r$-critical distribution on $G$, and $b$ and
$c$ are vertices in $G$ other than $a$. If $D$ has either one
pebble on $r$, or two pebbles on $a$, or four pebbles on $b$, or
two pebbles on both $b$ and $c$, then $D$ must have no pebbles on
any other vertex. Alternatively, suppose that none of these
conditions are met. In this case, $D$ has no pebbles on $r$, less
than two pebbles on $a$, less than four pebbles on any other
vertex, and more than one pebble on at most one vertex.

If $D$ has more than one pebble on no vertex, then $D$ is
$r$-insufficient.  So without loss of generality, $2 \leq D(b) <4$
and $D(v)<2$ for all other vertices $v$ in $G$.  If $D(b)=3$ then
any solution of $D$ will leave at least one pebble on $b$, and $D$
will not be $r$-critical. So $D$ must have form (v).  We have shown
that all $r$-critical distributions on $G$ must have one of the
five forms listed.
\end{proof}

\begin{theorem}The star $K_{1,n}$ has pebbling number $n+2$ and
$r$-critical pebbling number $4$ for $n \geq 4$.
\label{star_theorem}
\end{theorem}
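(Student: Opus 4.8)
The plan is to prove two separate claims about the star $K_{1,n}$ for $n \geq 4$: that $p(K_{1,n}) = n+2$ and that $c_r(K_{1,n}) = 4$.

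\textbf{The pebbling number.} The center vertex $a$ of the star is adjacent to every other vertex, so the diameter is $2$, and any two leaves are at distance $2$ from each other. For the lower bound $p(K_{1,n}) \geq n+2$, I would exhibit an unsolvable distribution with $n+1$ pebbles: place one pebble on each of the $n$ leaves and one pebble on the center $a$, chosen so that some leaf $r$ receives a pebble but cannot be reached. In fact a cleaner choice is to put one pebble on each leaf other than $r$ and one pebble on $a$ (total $n$ pebbles), which is unsolvable to $r$ since no leaf has two pebbles to push to the center and $a$ has only one pebble; to reach $n+1$ one also places an extra pebble somewhere that still fails. For the upper bound $p(K_{1,n}) \leq n+2$, I would argue that any distribution with $n+2$ pebbles is solvable to an arbitrary root $r$: if $r=a$, then since there are $n+2$ pebbles on $n+1$ vertices, either $a$ already holds a pebble or some leaf holds at least two; if $r$ is a leaf, a short pigeonhole argument shows enough pebbles accumulate on $a$ (possibly after moving pebbles inward from leaves) to push one out to $r$.

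\textbf{The $r$-critical pebbling number.} Here I would invoke Lemma \ref{cases} directly, since the center $a$ is adjacent to every other vertex. That lemma classifies all $r$-critical distributions on $K_{1,n}$ into the five forms (i)--(v). I would simply compute the size $|D|$ in each case: form (i) has size $1$; form (ii) has size $2$; form (iii) has size $4$; form (iv) has size $4$; and form (v) has size at least $2$ but I must check its maximum. For form (v), the distribution has two pebbles on some $b$ and at most one pebble on each other vertex; I would argue that any spare pebble sitting on a vertex from which it cannot help reach $r$ via a critical solution would violate minimality (using Lemma \ref{critical_solution}, that all solutions must be $r$-critical), forcing form (v) to actually reduce to a smaller size. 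The largest possible size among these forms is $4$, achieved by forms (iii) and (iv), so $c_r(K_{1,n}) = 4$.

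\textbf{Main obstacle.} The pebbling-number computation is a routine pigeonhole exercise and I expect it to be straightforward. The genuinely delicate step is verifying that form (v) cannot exceed size $4$ and, more importantly, confirming that forms (iii) and (iv) are genuinely realizable as \emph{minimally} $r$-solvable distributions (not merely $r$-solvable) for the star specifically---one must check that, e.g., four pebbles on a single leaf $b$ admit only $r$-critical solutions when $r$ is another leaf, and that no pebble can be deleted while preserving $r$-solvability. The hypothesis $n \geq 4$ is what guarantees these configurations with isolated leaves exist and behave as claimed, so I would pay attention to where that bound is actually used, likely in ensuring the distance-$2$ structure forces the full factor of $2^d = 4$ and in ruling out collapse of the star to a smaller graph.
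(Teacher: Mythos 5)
There are two genuine gaps here. First, your lower bound for the pebbling number fails as described. The distribution with one pebble on each leaf other than $r$ and one pebble on the center $a$ has $n$ pebbles and is indeed $r$-unsolvable, but it is \emph{maximally} unsolvable: adding a pebble to $a$ gives $a$ two pebbles, and adding a pebble to any leaf $b\neq r$ lets you play $[b,a]$ followed by $[a,r]$. So there is no way to ``place an extra pebble somewhere that still fails'' starting from that distribution, and you never exhibit an unsolvable distribution with $n+1$ pebbles. You need a structurally different witness, e.g.\ three pebbles on one leaf and one pebble on each of the remaining $n-2$ non-root leaves (total $n+1$, and only one pebble can ever reach $a$). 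Your first suggested distribution is worse still, since a distribution with a pebble already on $r$ is trivially $r$-solvable. The paper avoids all of this by citing Moews's tree formula; if you want a self-contained proof, the upper-bound pigeonhole sketch is fine, but the lower bound must be fixed.

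Second, for $c_r(K_{1,n})=4$ you correctly reduce to bounding form (v) of Lemma~\ref{cases}, but you leave the actual bound as an acknowledged obstacle (``any spare pebble \dots would violate minimality'') rather than proving it; a priori form (v) could have up to $n+1$ pebbles. The paper closes this with Corollary~\ref{even}: every leaf of $K_{1,n}$ other than $r$ has degree 1, so in an $r$-critical distribution it must carry an \emph{even} number of pebbles. Hence in form (v) the only vertex that can hold a single pebble is the center $a$, and the form has at most $2+1=3$ pebbles. That parity observation is the missing idea, and it is exactly where the star structure enters. Your worry about realizability of forms (iii) and (iv) is also unnecessary: the lower bound $c_r(K_{1,n})\geq 4$ is immediate from Lemma~\ref{lower_bound}(ii), since placing $2^d=4$ pebbles on a vertex at distance $d=2$ from $r$ is minimally $r$-solvable; you do not need to verify minimality of those forms separately.
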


\begin{proof}
The fact that $p(K_{1,n})=n+2$ is a corollary of Theorem 4 of
\cite{Moews92}, which gives a formula for the pebbling number of
any tree.

By Lemma \ref{lower_bound}, $c_r(K_{1,n}) \geq 4$. By Lemma
\ref{cases}, the only $r$-critical distributions on $K_{1,n}$ with
more than four pebbles must have two pebbles on one vertex, and
one pebble on at least three other vertices.  But by 
Corollary~\ref{even}, there is only one vertex in $K_{1,n}$ which can have
one pebble in an $r$-critical distribution. Hence
$c_r(K_{1,n})=4$.
\end{proof}

Note that Lemma \ref{upper_bound} gives us $c_r(G) \leq p(G)$ for
any graph $G$.  But in fact, Theorem \ref{star_theorem} gives an
example of a family of graphs for which the difference
$p(G)-c_r(G)$ is arbitrarily large.

Recall the \textit{fan} $F_k$, the path $P_k$ on $k$ vertices
with an additional vertex $x$ adjacent to every vertex in $P_k$.
$F_8$ is shown in Figure \ref{broken_wheel}.  The following theorem appears in~\cite{Feng02}.

\begin{figure}[ht!]
\begin{center}
\includegraphics[width=\textwidth]{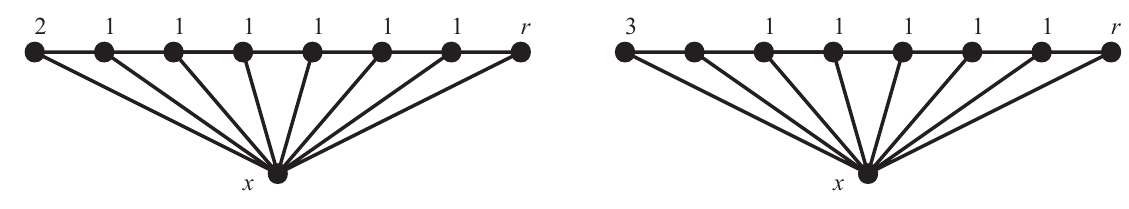}

\caption{An $r$-ceiling distribution and an $r$-insufficient
distribution on the fan $F_8$.} \label{broken_wheel}
\end{center}
\end{figure}

\begin{theorem} The fan $F_k$ has pebbling number $k+1$ for
$k \geq 4$.\label{broken_wheel_p}
\end{theorem}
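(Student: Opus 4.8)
The plan is to prove the two inequalities $p(F_k) \geq k+1$ and $p(F_k) \leq k+1$ separately. For the lower bound there is essentially nothing to do: the fan $F_k$ has $n = k+1$ vertices, so parts (vi) and (vii) of Lemma~\ref{upper_bound} give $k+1 = n \leq c_g(F_k) \leq p(F_k)$ immediately. Concretely, the distribution placing a single pebble on every vertex except the root admits no pebbling step, so it is unsolvable and already witnesses $p(F_k) > k$. (The diameter bound $2^d = 4$ from part (ii) is dominated here, since $k + 1 \geq 5$ for $k \geq 4$.)

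The real work is the upper bound: every distribution $D$ on $F_k$ with $|D| = k+1$ must be $r$-solvable for every root $r$. Write the path as $v_1 v_2 \cdots v_k$ and let $x$ be the hub adjacent to all the $v_j$. I would split on the location of $r$. If $r = x$, then either $D(x) \geq 1$ and we are done immediately, or all $k+1$ pebbles sit on the $k$ path vertices, in which case pigeonhole forces some $v_j$ to carry at least two pebbles and the single step $[v_j, x]$ reaches the root.

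The main case is $r = v_i$, a path vertex; assume $D(v_i) = 0$, as otherwise we are done. The key quantity is the number of pebbles that can be funneled onto the hub, $H = D(x) + \sum_{j \neq i} \lfloor D(v_j)/2 \rfloor$: if $H \geq 2$ we amass two pebbles on $x$ and move one to $v_i$, since $x$ is adjacent to $v_i$. It remains to treat $H \leq 1$, and here a short counting argument should show that the constraint $|D| = k+1$ is extremely rigid. Since $D(x)$ and each floor term are nonnegative, $H \leq 1$ forces all but at most one path vertex to carry at most one pebble; tallying the $k-1$ available path vertices against the total then shows that the only configuration consistent with $|D| = k+1$ is $D(x) = D(v_i) = 0$, one path vertex $v_m$ carrying exactly three pebbles, and every remaining path vertex carrying exactly one.

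The crux of the argument, and the step I expect to be the main obstacle, is solving this single surviving configuration, in which the hub is useless. I would route the pile along the path toward $r$: the step $[v_m, w]$, where $w$ is the neighbor of $v_m$ nearer $v_i$, raises $w$ from $1$ to $2$; repeating this ``rolling two'' consumes the resident unit pebble at each intermediate vertex and advances one vertex closer to $v_i$ at every step, finally delivering a pebble to $v_i$. Verifying that this rolling process is sustained by the unit pebbles all the way to the root is routine once set up. The one subtlety to get right is the bookkeeping in the $H \leq 1$ case—confirming that no other distribution of $k+1$ pebbles survives—so that exactly one configuration ends up needing the path-routing trick.
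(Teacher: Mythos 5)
Your proof is correct. Note, though, that the paper does not prove this statement at all: Theorem~\ref{broken_wheel_p} is quoted from Feng and Kim~\cite{Feng02}, so there is no in-paper argument to compare against. What you have supplied is a complete, self-contained, elementary proof. The lower bound via the all-ones-except-the-root distribution (equivalently Lemma~\ref{upper_bound}(vi)--(vii)) is standard. For the upper bound, your reduction through the hub quantity $H = D(x) + \sum_{j\neq i}\lfloor D(v_j)/2\rfloor$ is the right move: $H \geq 2$ solves every root through the hub in two rounds of moves, and your tally correctly shows that $H \leq 1$ together with $|D| = k+1$ and $D(r)=0$ forces the unique configuration $D(x)=0$, $D(v_m)=3$, and one pebble on each remaining path vertex. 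The ``rolling two'' along the path from $v_m$ to $v_i$ then works exactly as you describe, since every intermediate path vertex holds exactly one resident pebble, so each arrival creates a pair that can be pushed one step closer to the root. The only cosmetic remark is that your case $r=x$ and the $H\geq 2$ case could be merged, since a pebble reaching $x$ is either the goal or one hub-step from it; but as written the argument is sound and gives a proof the paper itself omits.
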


%

\begin{theorem}The fan $F_k$ has $r$-critical pebbling number $k$ for $k \geq 4$.
\label{wheel_theorem} \end{theorem}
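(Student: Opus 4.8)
The key structural fact is that the hub vertex $x$ is adjacent to every other vertex of $F_k$, so Lemma~\ref{cases} applies with $a = x$, and every $r$-critical distribution falls into one of the five listed forms. Since the diameter of $F_k$ is $2$, Lemma~\ref{lower_bound} already gives $c_r(F_k) \ge 2^d = 4$, and forms (i)--(iv) each contain at most $4 \le k$ pebbles. Thus the whole problem reduces to analyzing form (v): a distribution with two pebbles on a single path vertex $b$, no pebble on $r$, and at most one pebble on every other vertex. Such a distribution has at most $2 + (k-1) = k+1$ pebbles, so it suffices to (a) exhibit an $r$-critical form-(v) distribution with exactly $k$ pebbles, and (b) rule out $k+1$.

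For the lower bound I would root at an endpoint, say $r = v_1$, and take $D(v_k) = 2$, $D(v_j) = 1$ for $2 \le j \le k-1$, and no pebbles elsewhere (so $x$ is empty); this has $k$ pebbles. It is $r$-solvable by the cascade $[v_k,v_{k-1}], [v_{k-1},v_{k-2}], \dots, [v_2,v_1]$, in which each move upgrades the next single to a pair and finally reaches $r$. To see it is minimally $r$-solvable I would check that deleting any pebble breaks solvability: removing one of the two pebbles on $v_k$ leaves only singles, so nothing can move; removing the single at some $v_i$ ($2 \le i \le k-1$) creates a gap the cascade cannot cross, since the only escape from $v_{i+1}$ is a wasted move into $x$ or backwards, neither of which can be completed to a second pebble on $x$. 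Hence $D$ is $r$-critical and $c_r(F_k) \ge k$.

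The upper bound is where the universal vertex does the work. Suppose a form-(v) distribution $D$ has the maximum $k+1$ pebbles; then every non-root vertex carries a pebble, and in particular $x$ carries at least one. If $r \ne x$, the vertex $b$ holding the two pebbles is a path vertex, hence adjacent to $x$, so the two moves $[b,x]$ followed by $[x,r]$ reach $r$ while leaving the remaining $k-2 \ge 2$ singles untouched; if $r = x$, the single move $[b,r]$ does the same. Either way $D$ admits a solution that is not $r$-critical, so by Lemma~\ref{critical_solution} $D$ is not $r$-critical, a contradiction. Therefore every $r$-critical distribution has at most $k$ pebbles, and $c_r(F_k) = k$.

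I expect the main obstacle to be the upper bound, and specifically the realization that one should \emph{not} try to characterize all $r$-critical form-(v) distributions (their supports are shortcut-free simple ``cascade paths,'' and counting these directly is fiddly). The clean route is instead to assume the extremal size $k+1$ and produce a single wasteful two-hop solution through $x$; verifying the minimality of the endpoint construction for the lower bound is routine by comparison.
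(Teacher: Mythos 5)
Your proposal is correct and follows essentially the same route as the paper: both reduce via Lemma~\ref{cases} (with $a=x$) to the case of two pebbles on one path vertex plus singles, both use the universal vertex $x$ to show that a distribution with $k+1$ pebbles admits a wasteful two-step solution through $x$ and hence is $r$-excessive by Lemma~\ref{critical_solution}, and both exhibit a $k$-pebble $r$-critical distribution for the lower bound. The only difference is one of explicitness --- the paper cites its Figure~\ref{broken_wheel} for the lower-bound example and says the excessiveness cases are ``easily checked,'' whereas you write out the endpoint-rooted cascade distribution and verify its minimality directly.
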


\begin{proof}
We first prove that $c_r(F_k) \leq k$. Let $D$ be an $r$-critical
distribution on $F_k$ with more than $k$ pebbles.  Since $k \geq
4$, by Lemma \ref{cases}, $D$ must have two pebbles on one vertex
and one pebble on at least $k-1$ other vertices.  Therefore,
either $r$ has a pebble on it, or $r=x$, or $x$ has a pebble on
it. In each of these cases, one can easily check that $D$ is
$r$-excessive.  Thus, there are no $r$-critical distributions on
$F_k$ with more than $k$ pebbles.

The first rooted distribution shown in Figure \ref{broken_wheel}
is $r$-critical as long as $k \geq 4$, and has $k$ pebbles.  It
follows that $c_r(F_k)=k$ for $k \geq 4$.
\end{proof}

\begin{corollary}
For a positive integer $k$, there exist graphs with $r$-critical pebbling number $k$ if and only if $k \not=3$.
\end{corollary}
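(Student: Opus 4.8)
The plan is to prove both directions of the biconditional: first that graphs with $r$-critical pebbling number $k$ exist for every positive integer $k \neq 3$, and second that no graph has $r$-critical pebbling number exactly $3$.

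For the existence direction, I would handle the small cases and the large cases separately. For $k=1$, the single-vertex graph $K_1$ works, since (as noted at the end of Section~1) all seven parameters equal $1$ there; alternatively any graph with a distribution of one pebble on the root is minimally $r$-solvable. For $k=2$, I would invoke Table~1, which gives $c_r(K_5)=2$. For all $k \geq 4$, Theorem~\ref{wheel_theorem} is exactly what I need: the fan $F_k$ has $c_r(F_k)=k$ for $k \geq 4$. Thus for every $k \in \{1,2\} \cup \{4,5,6,\dots\}$ I can exhibit an explicit graph realizing that value, which establishes the ``if'' direction completely.

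The substance of the corollary — and the part I expect to be the main obstacle — is the ``only if'' direction, namely ruling out $k=3$. I must show that no connected graph $G$ has $c_r(G)=3$; that is, there is no $r$-critical distribution of size exactly $3$ that is also of maximum size among $r$-critical distributions. The natural approach is to argue that an $r$-critical distribution $D$ with $|D|=3$ can never be an $r$-ceiling distribution, because one can always produce an $r$-critical distribution with more pebbles (in fact at least $4$). By Lemma~\ref{lower_bound}(ii), $2^d \leq c_r(G)$; so if $d \geq 2$ then $c_r(G) \geq 4$ immediately, and we are done. Hence the only danger is $d=1$, i.e.\ $G$ is a complete graph $K_n$. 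I would then analyze $r$-critical distributions on $K_n$ directly: here every non-root vertex is adjacent to $r$ and to every other vertex, so Lemma~\ref{cases} applies (taking $a$ to be any vertex, since all are universal). The listed forms (i)--(v) of $r$-critical distributions have sizes $1$, $2$, $4$, $4$, and $2$ respectively, and none of them has size $3$.

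The key point to verify carefully is that on $K_n$ every $r$-critical distribution has size in $\{1,2,4\}$, never $3$. This follows because Lemma~\ref{cases} enumerates all possible forms, and a direct check of each form gives its size: form (i) has size $1$; form (ii) has size $2$; forms (iii) and (iv) have size $4$; and form (v), being two pebbles on a single vertex $b$ (with every other vertex carrying at most one pebble, but those extra pebbles would themselves have to participate in or obstruct the critical solution), reduces to size $2$ after confirming no stray pebble can remain. Therefore $c_r(K_n) \neq 3$ for every $n$, and combined with the $d \geq 2$ case this shows $c_r(G) \neq 3$ for all connected $G$. The remaining care is to confirm that the fan examples and the small cases together leave no gap in the achievable values, so that exactly $k=3$ is excluded, completing the proof.
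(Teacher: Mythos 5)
Your proposal is correct and follows essentially the same route as the paper: exhibit $K_1$ (or $P_1$), a diameter-one graph for $k=2$, and the fan $F_k$ for $k\geq 4$, then exclude $k=3$ by splitting into the case $d(G)\geq 2$ (where $c_r(G)\geq 2^d\geq 4$) and the case of a universal/complete-graph configuration handled via the structure of $r$-critical distributions from Lemma~\ref{cases}. The paper phrases the $k=3$ step as a direct case analysis of an arbitrary $3$-pebble rooted distribution rather than first reducing to $K_n$, but the underlying ideas are identical, and your treatment of form~(v) correctly closes the one point needing care.
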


\begin{proof}
For $k=1,2$, the path $P_k$ has $r$-critical pebbling number $k$.
For $k \geq 4$, the fan $F_k$ has $r$-critical pebbling number $k$
by Theorem \ref{wheel_theorem}.

Now suppose that $D$ is a rooted distribution on the graph $G$
with 3 pebbles.  We show that $D$ is not an $r$-ceiling
distribution. If $r$ has a pebble on it then $D$ is $r$-excessive.  Otherwise, if there is no vertex $a$ with $D(a) \geq 2$, then
there are no legal pebbling steps from $D$, and $D$ is
$r$-insufficient.  If there is such a vertex, and $a$ is adjacent
to every other vertex of $G$, then two pebbles from $a$ can be
used to pebble to $r$, and $D$ is $r$-excessive.  If $a$ is not
adjacent to every other vertex of $G$, then $d(G) \geq 2$, so
$c_r(G) \geq 4$. Hence, there is no $r$-ceiling distribution with
three pebbles.
\end{proof}

Also note that $F_k$ is an example of a graph with diameter 2 and
arbitrarily large $r$-critical pebbling number.  Thus, $c_r(G)$ is
not bounded above by any function of $d(G)$.  

\section{Greed, Thrift, and Weight}

We denote the distance between the vertices $a$ and $b$ by
$d(a,b)$.  The pebbling step $[a,b]$ is \textit{greedy} if
$d(a,r)>d(b,r)$; in other words, the step moves towards the root.
The rooted distribution $D$ is \textit{greedy} if there is a
solution of $D$ which uses only greedy pebbling steps.  The graph
$G$ is \textit{greedy} if every distribution with at least $p(G)$
pebbles is greedy \cite{Hurlbert99}.  The graph $G$ is
\textit{thrifty} if every $r$-critical distribution with at least
$c_r(G)$ pebbles (i.e. every $r$-ceiling distribution) is greedy.

In general, computing the pebbling number of a graph is a hard problem.  However, Hurlbert notes that for greedy graphs, the
computation becomes much easier \cite{Hurlbert99}. For thrifty
graphs, the $r$-critical pebbling number is even easier to
compute. As we shall see below, the $r$-critical pebbling number
of a thrifty graph is determined by its diameter.

The \textit{weight} of the rooted distribution $D$ is the value
\begin{equation*} \label{sum} w(D)=\sum_{v \in V(G)}
\frac{D(v)}{2^{d(v,r)}}.\end{equation*} The \textit{weight} $w(G)$
of $G$ is the largest weight of any $r$-ceiling distribution on
$G$. Note that there may be $r$-critical distributions with fewer
pebbles and larger weight, for example, given any rooted graph
$G$, the distribution with two pebbles on one vertex adjacent to
$r$ has weight 1.  We are interested only in the weight of
$r$-critical distributions with exactly $c_r(G)$ pebbles.

\begin{lemma}If the rooted distribution $E$ is obtained from the
rooted distribution $D$ by a greedy pebbling step, then
$w(E)=w(D)$.  If the rooted distribution $E$ is obtained from the
rooted distribution $D$ by a non-greedy pebbling step, then
$w(E)<w(D)$. \label{step_weight}
\end{lemma}

\begin{proof}
Suppose $E$ is obtained from $D$ by the pebbling step $[a,b]$.  If
$[a,b]$ is greedy and $d(a,r)=s$, then $d(b,r)=s-1$.  $E$ has two
fewer pebbles on $a$ and one additional pebble on $b$.  So
$w(E)=w(D)-\frac{2}{2^s}+\frac{1}{2^{s-1}}=w(D)$.

If $[a,b]$ is not greedy and $d(a,r)=s$, then $d(b,r)=t \geq s$.
So $w(E)=w(D)-\frac{2}{2^s}+\frac{1}{2^t}<w(D)$.
\end{proof}

\begin{lemma}If $D$ is $r$-critical and greedy then $w(D)=1$. \label{dist_weight}
\end{lemma}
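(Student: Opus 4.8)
The plan is to combine the two lemmas that have just been established. Lemma~\ref{dist_weight} concerns an $r$-critical distribution $D$ that is greedy, so by definition there exists a solution $S$ of $D$ using only greedy pebbling steps. The natural idea is to track the weight $w$ along this greedy solution and read off the value at both endpoints.

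First I would invoke Lemma~\ref{step_weight}: since every step in $S$ is greedy, the weight is unchanged by each step, so the weight of the final distribution equals $w(D)$. Second, I would use the fact that $D$ is $r$-critical to pin down the final distribution. By Lemma~\ref{critical_solution}, every solution of an $r$-critical distribution is $r$-critical, meaning it ends with exactly one pebble on $r$ and no pebbles elsewhere. In particular the greedy solution $S$ ends in this terminal distribution, call it $E$, with $E(r)=1$ and $E(v)=0$ for $v \neq r$. Third, I would compute $w(E)$ directly from the definition: since $d(r,r)=0$, we get
\begin{equation*}
w(E)=\frac{E(r)}{2^{d(r,r)}}=\frac{1}{2^0}=1.
\end{equation*}
Chaining these together, $w(D)=w(E)=1$, which is the claim.

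I do not expect any serious obstacle here; the work has been front-loaded into Lemmas~\ref{critical_solution} and~\ref{step_weight}. The only point requiring care is making sure both facts are applied to the \emph{same} solution $S$: greediness of $D$ guarantees the existence of at least one greedy solution, and $r$-criticality (via Lemma~\ref{critical_solution}) guarantees that \emph{every} solution, including this greedy $S$, terminates in the single-pebble-on-$r$ configuration. So there is no tension between choosing a greedy solution and controlling its endpoint. The remaining step is the trivial weight computation for the terminal distribution.
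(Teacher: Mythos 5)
Your proof is correct, and it takes a genuinely different route from the paper's. The paper proves this lemma by induction on the number of pebbles: it peels off the first greedy step $[a,b]$ of a greedy solution to obtain a smaller distribution $E$, then must separately verify that $E$ is again $r$-critical (via Lemma~\ref{critical_solution}) and again greedy before applying the inductive hypothesis $w(E)=1$ and Lemma~\ref{step_weight}. Your argument instead runs the entire greedy solution at once: weight is invariant under every greedy step by Lemma~\ref{step_weight}, and Lemma~\ref{critical_solution} identifies the terminal configuration as a single pebble on $r$, of weight $2^{-d(r,r)}=1$. This is cleaner --- it uses the same two ingredients but avoids the induction entirely, and in particular avoids the (necessary for the induction, but otherwise superfluous) verification that the intermediate distributions remain $r$-critical and greedy. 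Your closing observation that Lemma~\ref{critical_solution} applies to \emph{every} solution, hence in particular to the chosen greedy one, is exactly the point that makes the direct argument go through.
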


\begin{proof}
We proceed by induction.  The only $r$-critical distribution on
$G$ with size 1 is the rooted distribution with one pebble on $r$
and no pebble on any other vertex.  This rooted distribution is
$r$-critical, greedy, and has weight 1 on every graph $G$.

Suppose that $D$ is an $r$-critical, greedy distribution on $G$
with $k$ pebbles. Since $D$ is greedy, there exists a greedy
pebbling step $[a,b]$ from $D$ to a new rooted distribution $E$
with $k-1$ pebbles which is the first pebbling step in a greedy solution
of $D$.

If $E$ is not $r$-critical, then by Lemma \ref{critical_solution}
there exists a non-$r$-critical solution of $E$.  Appending the
pebbling step $[a,b]$ to this solution yields a non-$r$-critical
solution of $D$.  Therefore, $E$ is $r$-critical.  Because $[a,b]$
is the first step in a greedy solution of $D$, the remainder of
this solution is a greedy solution of $E$, so $E$ is also greedy.
By the induction hypothesis $w(E)=1$, and by Lemma
\ref{step_weight}, $w(D)=w(E)$. Consequently, $w(D)=1$.
\end{proof}

\begin{corollary}If $w(D)<1$ then $D$ is $r$-insufficient. \label{min_weight}
\end{corollary}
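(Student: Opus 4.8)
Looking at the corollary to prove:

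Corollary: If $w(D) < 1$ then $D$ is $r$-insufficient.

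Let me understand the setup.The plan is to prove the contrapositive: if $D$ is $r$-solvable, then $w(D) \geq 1$. Equivalently, $w(D) < 1$ forces $D$ to be not $r$-solvable, which is exactly the definition of $r$-insufficient. The two tools I would lean on are both already available: Lemma~\ref{step_weight}, which says weight is non-increasing under any single pebbling step (constant for greedy steps, strictly smaller for non-greedy ones), and the simple observation that the root contributes to the weight with coefficient $\frac{1}{2^{d(r,r)}} = \frac{1}{2^0} = 1$.

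First I would assume $D$ is $r$-solvable and fix a solution, that is, a finite sequence of pebbling steps producing distributions $D = E_0, E_1, \ldots, E_m$ where $E_m$ has at least one pebble on $r$. The endpoint is the anchor of the argument: since $E_m(r) \geq 1$ and all other terms in the weight sum are non-negative, we immediately get
\begin{equation*}
w(E_m) = \sum_{v \in V(G)} \frac{E_m(v)}{2^{d(v,r)}} \geq \frac{E_m(r)}{2^{0}} \geq 1.
\end{equation*}

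Next I would chain Lemma~\ref{step_weight} across the whole solution. Each $E_{i+1}$ is obtained from $E_i$ by a single pebbling step, so $w(E_{i+1}) \leq w(E_i)$ regardless of whether that step is greedy or not. Applying this inequality $m$ times yields $w(D) = w(E_0) \geq w(E_1) \geq \cdots \geq w(E_m) \geq 1$. Hence any $r$-solvable distribution has weight at least $1$, and taking the contrapositive completes the proof.

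I do not anticipate a genuine obstacle here, since the statement is essentially a monotonicity argument bookended by a trivial lower bound at the root. The only point that deserves a word of care is the direction of the inequality in Lemma~\ref{step_weight}: applying a pebbling step can only lower (or preserve) the weight, so weight decreases as we progress through the solution, meaning the \emph{starting} distribution $D$ is the heaviest in the chain. Keeping that orientation straight is what makes the bound $w(D) \geq w(E_m) \geq 1$ come out the right way.
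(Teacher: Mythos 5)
Your proof is correct and follows essentially the same route as the paper's: the paper also invokes Lemma~\ref{step_weight} to note that no pebbling step increases weight and that a solved distribution has weight at least $1$ (from the root's contribution), concluding that any $r$-solvable distribution has weight at least $1$. You have simply written out the chain of inequalities more explicitly.
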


\begin{proof}
By Lemma \ref{step_weight}, no pebbling step increases the weight
of a rooted distribution, and the weight of a solved rooted
distribution is at least 1.  It follows that the weight of any
$r$-solvable rooted distribution is at least 1.
\end{proof}

\begin{corollary}For any graph $G$, $w(G) \geq 1$. \label{weight_bound} \hfill $\square$
\end{corollary}

\begin{theorem}The graph $G$ is thrifty if and only if $w(G)=1$.
\label{thrifty_weight}
\end{theorem}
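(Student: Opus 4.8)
The plan is to prove the two implications separately; both are driven by the weight lemmas already in hand, together with the basic fact that an $r$-ceiling distribution $D$ is $r$-solvable and hence satisfies $1 \leq w(D) \leq w(G)$, the lower bound coming from Corollary~\ref{min_weight} and the upper bound from the definition of $w(G)$ as the largest weight of an $r$-ceiling distribution. For the forward direction I would assume $G$ is thrifty, so that every $r$-ceiling distribution is greedy. Since such a distribution is in particular $r$-critical, Lemma~\ref{dist_weight} yields $w(D) = 1$ for each of them, and therefore $w(G)$, being the maximum of these weights, equals $1$. This direction is essentially a one-line application of Lemma~\ref{dist_weight}.

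For the reverse direction I would assume $w(G) = 1$ and show that an arbitrary $r$-ceiling distribution $D$ is greedy. First I pin down $w(D) = 1$ by squeezing: $w(D) \geq 1$ by Corollary~\ref{min_weight} and $w(D) \leq w(G) = 1$ by the definition of $w(G)$. Next I take any solution $S$ of $D$. Because $D$ is $r$-critical, Lemma~\ref{critical_solution} forces $S$ to be $r$-critical, so $S$ ends at the distribution with a single pebble on $r$, whose weight is $1/2^{d(r,r)} = 1$. Finally I track the weight along $S$ via Lemma~\ref{step_weight}: greedy steps leave it unchanged while non-greedy steps strictly decrease it. Since the weight begins and ends at $1$ and no step ever increases it, a single non-greedy step would be irrecoverable; hence every step of $S$ is greedy, so $D$ is greedy. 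As $D$ was an arbitrary $r$-ceiling distribution, $G$ is thrifty.

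The only delicate point is this last monotonicity argument, and it is where I expect the substance of the proof to sit: it is the \emph{strictness} of the decrease in Lemma~\ref{step_weight}, combined with the endpoint weights being forced to equal $1$, that rules out all non-greedy steps simultaneously. It is worth noting that the argument in fact establishes more than thriftiness requires—when $w(G) = 1$, \emph{every} solution of \emph{every} $r$-ceiling distribution is greedy, not merely that each such distribution admits some greedy solution.
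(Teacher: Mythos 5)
Your proposal is correct and follows essentially the same route as the paper: the forward direction is the same one-line application of Lemma~\ref{dist_weight}, and the reverse direction is the same weight-monotonicity argument via Lemma~\ref{step_weight}, with the only cosmetic difference being that you pin the terminal weight at exactly $1$ using Lemma~\ref{critical_solution} while the paper derives a contradiction with Corollary~\ref{min_weight}.
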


\begin{proof}
Suppose $G$ is thrifty and $D$ is an $r$-ceiling distribution on
$G$. Then since $G$ is thrifty, $D$ is greedy.  By Lemma
\ref{dist_weight}, $D$ has weight 1.  As this is true for any
$r$-ceiling distribution on $G$, $w(G)=1$.

Conversely, suppose $w(G)=1$ and $D$ is an $r$-ceiling
distribution on $G$. Then $D$ has weight 1.  Suppose by way of contradiction that there is a solution of $D$ with a non-greedy pebbling step, say from distribution $D'$ to distribution $D''$ on $G$.  Then by Lemma~\ref{step_weight}, $w(D'')<w(D')$.  Also by Lemma~\ref{step_weight}, no pebbling step increases the weight of a distribution.  Then the solution of $D$ ends with a weight of less than 1.  This contradicts Corollary~\ref{min_weight}.  
So $D$ must be $r$-solvable using only greedy pebbling steps.  Thus, $D$ is
greedy, and $G$ is thrifty.
\end{proof}

\begin{theorem}If $G$ is a thrifty graph with diameter $d$, then
$c_r(G)=2^d$. \label{thrifty_number}
\end{theorem}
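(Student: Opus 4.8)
The plan is to combine the lower bound $2^d \le c_r(G)$ from Lemma~\ref{upper_bound}(ii) with a matching upper bound obtained from the weight function. Since the lower bound is already in hand, the entire task reduces to proving $c_r(G) \le 2^d$ for a thrifty graph $G$.

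First I would fix an $r$-ceiling distribution $D$ on $G$, which exists because $c_r(G)$ is by definition the maximum size of an $r$-critical distribution and this maximum is attained. By definition of thrifty, every $r$-ceiling distribution is greedy, so $D$ is greedy; since $D$ is also $r$-critical, Lemma~\ref{dist_weight} gives $w(D)=1$. (Equivalently, one could invoke Theorem~\ref{thrifty_weight} to get $w(G)=1$, but the direct appeal to Lemma~\ref{dist_weight} is cleaner here.)

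The crux is to relate the weight $w(D)$ to the pebble count $|D|$ using the diameter. For every vertex $v$ we have $d(v,r)\le d$, hence $2^{d(v,r)}\le 2^d$ and therefore $\frac{1}{2^{d(v,r)}}\ge \frac{1}{2^d}$. Summing the weight contributions vertex by vertex yields
\[
1 = w(D) = \sum_{v \in V(G)} \frac{D(v)}{2^{d(v,r)}} \ge \sum_{v \in V(G)} \frac{D(v)}{2^{d}} = \frac{|D|}{2^{d}},
\]
and rearranging gives $|D| \le 2^d$. Since $|D| = c_r(G)$, this establishes $c_r(G) \le 2^d$. Together with Lemma~\ref{upper_bound}(ii) this forces $c_r(G)=2^d$.

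I do not anticipate a serious obstacle: the argument is essentially a one-line estimate once the weight of an $r$-ceiling distribution is pinned to $1$. The only points requiring care are confirming that an $r$-ceiling distribution genuinely exists (so that $w(D)=1$ is meaningful) and that the definition of thrifty applies to \emph{every} $r$-ceiling distribution, not merely some. Both follow immediately from the definitions given earlier, so the proof is short and self-contained.
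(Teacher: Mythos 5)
Your proposal is correct and follows essentially the same route as the paper: combine the lower bound $2^d \le c_r(G)$ from Lemma~\ref{lower_bound}(ii) with the fact that an $r$-ceiling distribution on a thrifty graph has weight $1$, and then observe that each pebble contributes at least $\frac{1}{2^d}$ to the weight, forcing $|D| \le 2^d$. The only cosmetic difference is that you invoke Lemma~\ref{dist_weight} directly where the paper cites Theorem~\ref{thrifty_weight}, but the latter's forward direction is proved by exactly that lemma, so the arguments coincide.
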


\begin{proof}
Suppose $D$ is an $r$-ceiling distribution on $G$.  By
Lemma
\ref{lower_bound}, $|D| \geq 2^d$.  Since $G$ is thrifty, by Theorem
\ref{thrifty_weight}, $w(D)=1$. Because every pebble in $D$ must
contribute at least $\frac{1}{2^d}$ to $w(D)$, there must be
exactly $2^d$ of them.
\end{proof}

By Theorems \ref{thrifty_weight} and \ref{thrifty_number}, all
thrifty graphs achieve the lower bound for weight given in
Corollary~\ref{min_weight} and the lower bound for $r$-critical
pebbling number given in Lemma~\ref{lower_bound}. Thus, thrifty
graphs are in some sense the simplest graphs with respect to
$r$-critical pebbling number. However, although all graphs with
weight 1 are thrifty, we prove in Theorem \ref{goggles_theorem}
that not all graphs with $r$-critical pebbling number $2^d$ are
thrifty.

\section{Separating Examples}

The goal of this section is to provide examples of graphs in all of the regions of the Venn diagram in Figure~\ref{pebbling_classes}.  We consider five specific graphs, which we prove distinguish
the classes of greedy graphs, thrifty graphs, and graphs $G$ for
which $c_r(G)=2^d$.  The first of these is $C_7$, the cycle on 7
vertices.  The remaining four graphs we call $G_1$ through $G_4$,
and display them in Figures \ref{glasses}, \ref{goggles},
\ref{sting_ray}, and \ref{romulan}, respectively. For each graph,
we first determine its pebbling number and $r$-critical pebbling
number, and then prove that it has the required properties to fit in the claimed region of the Venn diagram in Figure~\ref{pebbling_classes}.

To determine $c_r(C_7)$, it will be useful to have the following
lemma.  If $H$ is a subgraph of $G$, $D$ is a rooted distribution
on $G$, and $E$ is a rooted distribution on $H$, we say that $E$
is \textit{induced} from $D$ if the root of $E$ is the root of
$D$, and $E(a)=D(a)$ for all vertices $a$ of $H$.

\begin{lemma}
Suppose that $D$ is a rooted distribution on the graph $G$, $P$ is a path
in $G$ with end vertex $r$, and $E$ is the rooted distribution on the subgraph $P$
induced from $D$.  If $w(E)>1$ on $P$, then $D$ is $r$-excessive on $G$.
\label{path_lemma}
\end{lemma}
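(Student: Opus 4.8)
The plan is to reduce the entire statement to the path $P$ itself. Write $P$ as $r=v_0,v_1,\dots,v_k$, so that within $P$ we have $d(v_i,r)=i$ and $w(E)=\sum_{i=0}^k E(v_i)/2^i$. Two things must be shown: that $D$ is $r$-solvable, and that $D$ is not $r$-critical. Both will follow from a single sub-claim about paths rooted at an endpoint: \emph{if a distribution $F$ on $P$ satisfies $w(F)\ge 1$, then $F$ is $r$-solvable on $P$}. Granting this, any $r$-solution of the induced distribution $E$ on $P$ is a sequence of pebbling steps along edges of $P\subseteq G$, hence lifts to an $r$-solution of $D$ on $G$ (the pebbles of $D$ off $P$ are simply left unused). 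Since $w(E)>1\ge 1$, this already shows that $D$ is $r$-solvable.

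The main work is the sub-claim, and the only obstacle in it is controlling the loss incurred by the floor in each pebbling step. I would argue by tracking the maximum number of pebbles collectible at each vertex: set $g_k=F(v_k)$ and $g_i=F(v_i)+\lfloor g_{i+1}/2\rfloor$, so that $g_i$ is the largest number of pebbles that can be amassed at $v_i$ using only the pebbles on $v_i,\dots,v_k$; then $F$ is $r$-solvable on $P$ exactly when $g_0\ge 1$. Writing $\lfloor g_{i+1}/2\rfloor=g_{i+1}/2-\epsilon_{i+1}$ with $\epsilon_{i+1}\in\{0,\tfrac12\}$ and unrolling the recursion gives $g_0=w(F)-\sum_{j=1}^{k}\epsilon_j/2^{\,j-1}$. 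The total loss is at most $\sum_{j=1}^{k}\tfrac12\cdot 2^{-(j-1)}=1-2^{-k}<1$, so $g_0\ge w(F)-(1-2^{-k})\ge 2^{-k}>0$ whenever $w(F)\ge 1$. As $g_0$ is an integer, $g_0\ge 1$, and $F$ is $r$-solvable on $P$.

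It remains to show that $D$ is not $r$-critical, i.e. not minimally $r$-solvable, by exhibiting a single pebble whose removal preserves $r$-solvability. Let $j=\max\{i: E(v_i)>0\}$, which exists because $w(E)>1$ forces $E$ to place pebbles on $P$. All of $E$'s pebbles on $P$ then lie on $v_0,\dots,v_j$, so $w(E)$ is an integer multiple of $2^{-j}$; combined with $w(E)>1$ this yields $w(E)\ge 1+2^{-j}$. Deleting one pebble from $v_j$ produces an induced distribution $E'$ with $w(E')=w(E)-2^{-j}\ge 1$, which is $r$-solvable on $P$ by the sub-claim, so the corresponding distribution $D'$ is $r$-solvable on $G$. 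Thus removing this pebble from $D$ leaves it $r$-solvable, and $D$ is not $r$-critical. Being $r$-solvable but not $r$-critical, $D$ is $r$-excessive, as required.

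I expect the rounding estimate in the sub-claim to be the crux; the strict inequality $w(E)>1$ is exactly what is needed afterwards, since it guarantees a full unit $2^{-j}$ of slack, so that one pebble can be deleted while keeping the weight at least $1$. One should double-check the degenerate case $j=0$ (all path pebbles on $r$), where $w(E)>1$ means $E(r)\ge 2$ and the argument goes through verbatim.
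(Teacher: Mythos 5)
Your proof is correct, but it takes a genuinely different route from the paper's. The paper leans on the machinery it has already built: since $w(E)>1$, either $r$ already carries a pebble (and then, because the weight exceeds $1$, so does some other vertex), or some vertex of $P$ carries at least two pebbles and one can pebble it toward $r$ along $P$; by Lemma~\ref{step_weight} each such greedy step preserves the weight, so the process can only terminate at a distribution with a pebble on $r$ and, the weight still exceeding $1$, a spare pebble elsewhere --- a non-$r$-critical solution, whence $D$ is $r$-excessive. You instead prove a sharper, self-contained quantitative fact: on a path rooted at an endpoint, $w(F)\ge 1$ already forces solvability, because unrolling the recursion $g_i=F(v_i)+\lfloor g_{i+1}/2\rfloor$ bounds the total rounding loss by $1-2^{-k}$, so $g_0\ge 2^{-k}>0$ and hence $g_0\ge 1$ by integrality. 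You then obtain non-criticality by exhibiting an explicit deletable pebble, using the observation that $w(E)$ is an integer multiple of $2^{-j}$, so $w(E)>1$ leaves a full quantum $2^{-j}$ of slack. Your version cleanly separates the two claims (solvable; not minimally so), avoids any appeal to Lemmas~\ref{step_weight} and~\ref{critical_solution}, and your sub-claim is exactly the converse, for endpoint-rooted paths, of Corollary~\ref{min_weight}; the paper's version is shorter given the lemmas already in hand. One cosmetic point: you assert that solvability of $F$ on $P$ is \emph{equivalent} to $g_0\ge 1$ (i.e., that greedy collection is optimal on a path), but you only use, and only need, the easy direction that the greedy procedure actually delivers $g_0$ pebbles to $r$.
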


\begin{proof}
Suppose $w(E)>1$ on the graph $P$.  If there is a pebble on $r$, then there is also a pebble not on $r$.  Then $D$ is $r$-excessive on $G$.  Now suppose there is no pebble on $r$.  If there is at most one pebble on every other vertex of $P$, then $w(E)<1$.  Therefore there must be more than one pebble on some vertex $a$ of $P$.  We pebble from $a$ towards $r$ on $P$.
Since this pebbling step is greedy, by Lemma \ref{step_weight},
the new rooted distribution $E'$ obtained from this pebbling step
still satisfies $w(E')>1$. We may continue in this way until we
reach a rooted distribution $F$ on $P$ with a pebble on $r$.  Because $w(F)>1$, again $F$ has a pebble not on $r$, so $F$ is
$r$-excessive.  As we may use these same
pebbling steps on $D$, $D$ is also $r$-excessive.
\end{proof}

\begin{theorem}The cycle $C_7$ is not thrifty, not greedy, and has $r$-critical pebbling
number greater than $2^d$. \label{cycle}
\end{theorem}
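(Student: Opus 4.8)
The plan is to prove the three claims in the order: the bound $c_r(C_7) > 2^d$ first (which immediately yields ``not thrifty''), then ``not greedy'' separately. Label the vertices of $C_7$ as $v_0, v_1, \ldots, v_6$ around the cycle and take $v_0$ as the root $r$. Then $d(v_i, r) = \min(i, 7-i)$, so the diameter is $d = 3$ and $2^d = 8$; the two vertices farthest from $r$ are $v_3$ and $v_4$, each at distance $3$.

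For the lower bound I would exhibit a single explicit $r$-critical distribution with more than $8$ pebbles, namely $D$ with $D(v_3) = 6$, $D(v_4) = 4$ and no pebbles elsewhere, so $|D| = 10$. First I would show $D$ is $r$-solvable by moving the four pebbles on $v_4$ across the far edge to $v_3$ (two steps $[v_4, v_3]$, depositing two pebbles and emptying $v_4$), giving $8$ pebbles on $v_3$, and then pebbling greedily $v_3 \to v_2 \to v_1 \to r$; this consumes every pebble and leaves exactly one on $r$. Then I would check minimality directly: deleting one pebble gives either $D(v_3)=5,D(v_4)=4$ or $D(v_3)=6,D(v_4)=3$, and in each case a short analysis (the only way to accumulate two pebbles on a neighbor of $r$ is to funnel everything toward one distance-$3$ vertex, which now yields fewer than $8$ there) shows the result is $r$-unsolvable. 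Hence $D$ is minimally $r$-solvable, i.e.\ $r$-critical, so $c_r(C_7) \ge |D| = 10 > 8 = 2^d$. Since a thrifty graph satisfies $c_r(G) = 2^d$ by Theorem~\ref{thrifty_number}, the strict inequality shows $C_7$ is not thrifty.

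For ``not greedy'', recall that $C_7$ is greedy only if every distribution with at least $p(C_7) = 11$ pebbles admits a solution using only greedy steps. I would produce a counterexample with exactly $11$ pebbles: take $D'$ with $D'(v_3) = 7$, $D'(v_4) = 4$. This is $r$-solvable (again route $v_4$ across to $v_3$ to reach nine pebbles there, then pebble greedily to $r$). But it has no greedy solution: every greedy step from $v_3$ goes to $v_2$ and every greedy step from $v_4$ goes to $v_5$, so the greedy flow keeps the two arcs of the cycle disjoint, and the left arc delivers $\lfloor 7/8\rfloor = 0$ pebbles to $r$ while the right arc delivers $\lfloor 4/8 \rfloor = 0$. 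Thus $D'$ witnesses that $C_7$ is not greedy. (One could alternatively note that the $r$-critical distribution $D$ above has weight $w(D) = 6/8 + 4/8 = 5/4 \ne 1$, so by Lemma~\ref{dist_weight} it cannot be greedy, reinforcing the failure of greediness.)

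The routine but slightly fiddly part is the minimality verification for $D$ and the confirmation that $D'$ has no greedy solution; both are finite checks on a $7$-vertex graph. The genuinely harder statement — not needed for the theorem as stated, but implicit in the intended value $c_r(C_7) = 10$ — is the matching upper bound $c_r(C_7) \le 10$. There I would apply Lemma~\ref{path_lemma} to the two geodesic paths $v_0 v_1 v_2 v_3$ and $v_0 v_6 v_5 v_4$: an $r$-critical distribution cannot be $r$-excessive, so the induced weight on each geodesic is at most $1$. Since this weight bound by itself only caps the total at $16$, the main obstacle is combining it with minimality to eliminate the remaining configurations — those placing many pebbles on $v_3$ and $v_4$ at once are solvable using a single arc and hence not minimal — and this final step requires the most care.
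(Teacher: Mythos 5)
Your proof is correct for the theorem as stated, and your witnesses are the same in spirit as the paper's (distributions concentrated on the two vertices antipodal to $r$: the paper uses $(4,6)$ for the $r$-critical distribution and $(5,6)$ for the non-greedy one, where you use $(6,4)$ and $(7,4)$). The genuine difference is logical structure. The paper's main technical content in this proof is the upper bound $c_r(C_7)\leq 10$: it applies Lemma~\ref{path_lemma} to the two \emph{full} six-vertex paths winding clockwise and counterclockwise from $r$ around the cycle (not just the two geodesics), adds the resulting weight inequalities to get $12|D|+21a+6b+6e+21f\leq 128$, and derives the contradiction $132\leq 128$ for $|D|\geq 11$. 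Knowing $c_r(C_7)=10$ exactly, the paper then concludes ``not thrifty'' directly from the definition, since the non-greedy distribution $(4,6)$ is an $r$-ceiling distribution. You sidestep the upper bound entirely by invoking Theorem~\ref{thrifty_number} in contrapositive form ($c_r(C_7)\geq 10\neq 2^d$ implies not thrifty), which is valid and strictly more economical for the stated theorem; what it does not buy you is the exact value $c_r(C_7)=10$ that the paper also wants (it is quoted in Table 1 and in Section 1). You correctly flag the upper bound as the hard part, but note that your sketched route via the two geodesics alone caps $|D|$ only at roughly $16$ and would need substantial extra case analysis; the paper's device of summing the weight inequalities over the two long paths is the cleaner way to close that gap. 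Your minimality check for $(6,4)$ is only sketched, but it is a finite verification and the paper asserts the corresponding fact for $(4,6)$ with no more detail.
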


\begin{proof}
The pebbling number of $C_7$ is 11 \cite{Pachter95}.  Let $r$ be
any vertex of $C_7$, and let $u$ and $v$ be the two vertices
farthest from $r$.  Then it is easy to verify that the rooted
distribution $D(u,v)=(5,6)$ is not greedy.

Suppose that $D$ is a rooted distribution on $C_7$ with 11 or more
pebbles, and suppose that the number of pebbles in $D$ on each of
the six non-root vertices of $C_7$ are $a$, $b$, $c$, $d$, $e$,
and $f$, starting from a vertex adjacent to $r$ and continuing
around the cycle.  We consider the two paths from $r$ clockwise
and counterclockwise around the cycle.  If $D$ is $r$-critical,
then by Lemma \ref{path_lemma}, $D$ must satisfy
\begin{align*}\frac{a}{2}+\frac{b}{4}+\frac{c}{8}+\frac{d}{16}+\frac{e}{32}+\frac{f}{64}
\leq 1 \text{ and} \\
\frac{a}{64}+\frac{b}{32}+\frac{c}{16}+\frac{d}{8}+\frac{e}{4}+\frac{f}{2}
\leq 1.
\end{align*}
Adding these inequalities and simplifying yields
\begin{align*}33a+18b+12c+12d+18e+33f \leq 128 \\
12(a+b+c+d+e+f)+21a+6b+6e+21f \leq 128.
\end{align*}
Since $|D|=a+b+c+d+e+f \geq 11$, we have \begin{align*} 132=12
\cdot 11 \leq 12(a+b+c+d+e+f)+21a+6b+6e+21f \leq 128, \end{align*}
which is a contradiction.  Consequently, there is no $r$-critical
distribution on $C_7$ with 11 or more pebbles, and $c_r(C_7) \leq
10$.  As the rooted distribution $D(u,v)=(4,6)$ is $r$-critical,
$c_r(C_7)=10$.  Also, this rooted distribution is not greedy, so
$C_7$ is not thrifty.

Finally, $d(C_7)=3$, and so $c_r(C_7)=10>8=2^d$.
\end{proof}
\bigskip

\begin{figure}[ht!]
\begin{center}
\includegraphics[width=\textwidth]{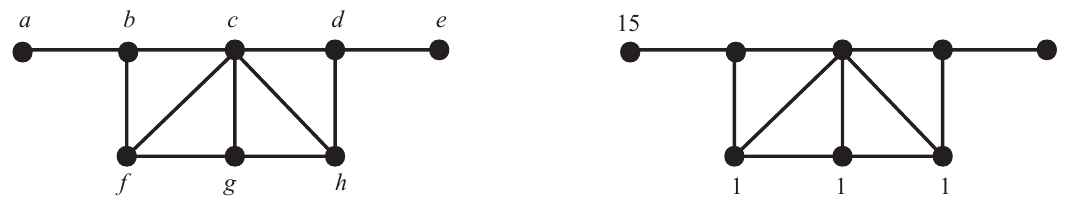}

\caption{A labeling and a rooted distribution on the graph $G_1$.}
\label{glasses}
\end{center}
\end{figure}

%

\begin{lemma}The pebbling number of $G_1$ is 18. \label{glasses_pebbling}
\end{lemma}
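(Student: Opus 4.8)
The plan is to prove the two bounds $p(G_1) \ge 18$ and $p(G_1) \le 18$ separately. The first is witnessed by the rooted distribution already drawn in Figure~\ref{glasses}: I would take its marked root $r$ and the displayed $17$ pebbles, and verify directly that no sequence of pebbling moves delivers a pebble to $r$. Because this is a single concrete distribution on a small graph, unsolvability is a finite check; I expect to organize it via Lemma~\ref{path_lemma}, first confirming that the induced weight of $D$ is at most $1$ on every path from $r$ (so that no single path already solves), and then supplementing this with a short direct argument that the two lenses cannot cooperate across the bridge. The point is that a pebble routed out of the far lens arrives at the bridge having contributed no more weight than it would on its own geodesic, so merging the two branches still cannot reach $r$. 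This produces a maximally unsolvable distribution of size $17$, giving $p(G_1) \ge 18$.

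For the upper bound I would show that every distribution $D$ with $|D| = 18$ is $r$-solvable for every root $r$, arguing by contradiction in the style of Theorem~\ref{cycle}. Suppose $D$ is unsolvable to $r$; then $D(r) = 0$, and by the contrapositive of Lemma~\ref{path_lemma} (using Lemma~\ref{step_weight} and Corollary~\ref{min_weight}) the induced weight of $D$ is at most $1$ on every path $P$ issuing from $r$, with distances measured along $P$. I would select a finite family of such paths $P_1, \dots, P_k$ whose union covers all vertices of $G_1$, together with nonnegative multipliers $\lambda_1, \dots, \lambda_k$, so that the combination $\sum_i \lambda_i\, w_{P_i}(D) \le \sum_i \lambda_i$ reduces, after clearing denominators, to an inequality of the form $c\,|D| + (\text{nonnegative terms}) \le N$ with $c > 0$. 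Choosing the paths and multipliers so that $N/c < 18$ then contradicts $|D| = 18$. This is precisely the mechanism used for $C_7$, where the two Hamiltonian paths from $r$ with equal multipliers produced $12|D| + 21a + 6b + 6e + 21f \le 128$ while $12 \cdot 11 = 132 > 128$.

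To keep the number of cases finite, I would first reduce the choice of root to one representative from each orbit of $\mathrm{Aut}(G_1)$ and handle each representative with its own path family. The main obstacle is this upper-bound case analysis: unlike the single highly symmetric cycle $C_7$, the graph $G_1$ has several inequivalent root positions (a root inside a lens, on the bridge, or at a junction vertex), and for each I must exhibit a path cover and multipliers whose resulting bound is tight enough to exclude $18$ pebbles yet still valid. In particular, the hardest root must yield a bound of exactly $17$, matching the witness in Figure~\ref{glasses}, while every other root yields no larger a bound. Verifying that a valid multiplier certificate exists for each root, and that every path used is a genuine path of $G_1$ and that the chosen paths cover every vertex, is the delicate part; the remaining algebra is routine once the certificates are fixed.
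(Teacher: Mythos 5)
Your lower-bound plan is essentially the paper's (exhibit a $17$-pebble $r$-unsolvable distribution; note the displayed distribution actually has $18$ pebbles and the witness is obtained by deleting the pebble on $f$), but your upper-bound mechanism has a fatal gap: the linear-programming certificate you propose does not exist. The constraints you would extract from Lemma~\ref{path_lemma} --- that an $r$-unsolvable $D$ has induced weight at most $1$ on every path issuing from $r$ --- are satisfiable by distributions with $18$ or more pebbles, so no choice of paths $P_i$ and multipliers $\lambda_i$ can force $|D|\leq 17$. Concretely, take $r=e$ and put $16=2^{d}$ pebbles on the antipodal vertex $a$ and one pebble on each of two vertices lying off every $e$--$a$ geodesic (such vertices exist in $G_1$; they are the path-ends of the fan, at distance at least $2$ from $e$). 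On a geodesic from $e$ through $a$ the weight is exactly $1$ and the extra pebbles do not appear; on any longer path $a$ sits at position at least $5$ and contributes at most $1/2$, while the two stray pebbles contribute at most $1/4+1/8$; on paths avoiding $a$ the weight is below $1/2$. Every path constraint holds, yet $|D|=18$. By LP duality, since the feasible region contains a point of size $18$, every nonnegative combination $\sum_i\lambda_i w_{P_i}(D)\leq\sum_i\lambda_i$ yields a bound $N/c\geq 18$, never $<18$. The $C_7$ computation transfers only because every vertex of a cycle lies on both Hamiltonian paths from the root with geometrically significant coefficients, so pebbles cannot be ``hidden''; and even there the paper uses the device to bound $c_r$, not $p$. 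Solvability of $18$-pebble distributions on $G_1$ is a combinatorial fact (a heavy far vertex exactly meeting the $2^k$ threshold plus irrelevant extras) that the weight function is structurally unable to certify.

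The paper's actual upper bound is a direct case analysis: reduce by symmetry to $D(a)\geq D(e)$ with $a$ not the root and $D(a)$ odd, split on whether the root lies in the fan $H\cong F_5$ or equals $e$, and on the value of $D(a)$; then funnel pebbles from $a$ and $e$ into $H$ and invoke $p(F_5)=6$ (Theorem~\ref{broken_wheel_p}) together with the $2$-pebbling property of diameter-$2$ graphs. If you want to salvage your write-up, you must replace the certificate argument with some such case analysis; the weight inequalities can at most dispose of a few subcases (those where some single path already carries weight exceeding $1$). A smaller issue: for the lower bound, checking that all path weights of the $17$-pebble witness are at most $1$ is necessary but not sufficient for unsolvability, so the ``short direct argument'' you defer is in fact the entire content of that half and must be carried out in full.
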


\begin{proof}
We follow the labeling system given in Figure \ref{glasses}.  Let
$H$ be the induced subgraph of $G_1$ on the vertices $b$, $c$,
$d$, $f$, $g$, and $h$. Note that $H \cong F_5$.  We will show
that any distribution on $G_1$ with 18 pebbles is solvable.

Let $D$ be a distribution on $G_1$ with 18 pebbles.   Assume without loss of generality that $D(a) \geq D(e)$.  If there are no pebbles on $a$, then there are also no pebbles on $e$, so we may assume that $a$ is not the root of $D$.  If there are pebbles on $a$, we may also assume that $a$ is not the root of $D$. So in all cases we may assume that $a$ is not the root.  Therefore in any solution of $D$, we will never make a pebbling step from $b$ to $a$, only from $a$ to $b$.  So we may also assume that $D(a)$ is odd because, 
if $D(a)$ is even, we may add one pebble at $a$ without affecting the solvability of $D$.
There are five cases to consider:

\noindent \textbf{Case 1. The vertex $r$ is in $H$.} 
We have 18
pebbles in our distribution.  We move as many pebbles as possible
from $a$ and $e$ into $H$.  After this, there is at most 1 pebble remaining on each
of $a$ and $e$.  At most half of the other 16 pebbles may be used in these pebbling steps, so there are at least 8 pebbles
on $H$ afterwards.  But $p(H)=6$ by Theorem \ref{broken_wheel}, so the resulting distribution of at least 8 pebbles on $H$ is solvable.

\noindent \textbf{Case 2. $D(a) \geq 17.$} In this case, we may
pebble to any root from $a$, since $d(G_1)=4$.

\noindent \textbf{Case 3. $D(a)=15$ and $r=e.$}  In this case, we
perform the pebbling step $[a,b]$ 7 times, yielding at least 7
pebbles on $b$, and 10 pebbles total on $H$. If there is an
additional pebble on $b$, $c$, or $d$, or two pebbles on $f$, $g$,
or $h$, the resulting distribution is solvable.  The
only case left is $D(b,f,g,h)=(7,1,1,1)$.  Then the rooted
distribution is still $r$-solvable.

\noindent \textbf{Case 4. $D(a)=13$ and $r=e.$} 
We perform the
pebbling step $[a,b]$ 6 times, yielding at least 6 pebbles on $b$,
and 11 pebbles total on $H$.  Since $H$ has diameter 2, by \cite{Pachter95} $H$ satisfies the 2-pebbling property, which states that if $H$ has $2p(H)-q+1$ pebbles on $q$ distinct vertices, then two pebbles may be moved to any specified vertex.  Since $p(H)=6$ by Theorem
\ref{broken_wheel_p}, if there are pebbles on at least two vertices of $H$, then starting with 11 pebbles on $H$ we may pebble two pebbles to $d$, and then one to $e$.  If the 11 pebbles are all on $b$ then we may pebble directly to $e$ on the path $bcde$.

\noindent \textbf{Case 5. $D(a) \leq 11$ and $r=e.$}  We perform
the pebbling step $[a,b]$ as many times as possible, yielding at
least 12 pebbles total on $H$.  Since $p(H)=6$ by Theorem
\ref{broken_wheel_p}, this means that we can pebble two pebbles to
$d$, and one to $r$.

So $p(G_1) \leq 18$.  If we remove the pebble on $f$ from the
rooted distribution in Figure \ref{glasses}, then this rooted
distribution is no longer $r$-solvable.  So $p(G_1)=18$.
\end{proof}

\begin{lemma}The $r$-critical pebbling number of $G_1$ is 16. \label{glasses_critical}
\end{lemma}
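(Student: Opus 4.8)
The plan is to establish the two inequalities $c_r(G_1) \ge 16$ and $c_r(G_1) \le 16$ separately. The lower bound is immediate from the structure already in hand: the proof of Lemma~\ref{glasses_pebbling} records that $G_1$ has diameter $d = 4$, so picking two vertices at distance $4$ (for instance $a$ and $e$ in the labeling of Figure~\ref{glasses}) and applying Lemma~\ref{lower_bound}(ii), the rooted distribution with $r$ one of them and $2^4 = 16$ pebbles on the other is minimally $r$-solvable. Hence $c_r(G_1) \ge 2^d = 16$, and all of the real work lies in the upper bound $c_r(G_1) \le 16$: I must show that no $r$-critical distribution on $G_1$ carries $17$ or more pebbles.

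For the upper bound I would argue exactly as in the $C_7$ computation of Theorem~\ref{cycle}, using the weight/path inequality of Lemma~\ref{path_lemma}. Fix a root $r$ and suppose $D$ is an $r$-critical distribution on $G_1$. By Lemma~\ref{path_lemma}, for \emph{every} path $P$ in $G_1$ with end vertex $r$ the induced distribution on $P$ has weight at most $1$, since otherwise $D$ would be $r$-excessive and hence not $r$-critical. The labeling in Figure~\ref{glasses} exhibits an evident left–right symmetry of $G_1$ (interchanging the roles of $a$ and $e$, and correspondingly the remaining vertices in pairs), so it suffices to treat one root from each orbit, reducing to only a handful of representative roots. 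For each such root I would choose a small family of paths from $r$ whose union contains every other vertex, record the corresponding ``weight at most $1$'' inequalities (whose denominators are the powers $2^{d_P(v,r)}$ along each path), and then take a suitable nonnegative combination of these inequalities so that the coefficient of each $D(v)$ is at least $\tfrac{1}{16}$. The combination yields $\tfrac{1}{16}\,|D| \le (\text{sum of right-hand sides})$, and arranging the paths so that this sum equals $1$ forces $|D| \le 16$; assuming $|D| \ge 17$ then produces a numerical contradiction, just as $12\cdot 11 = 132 \le 128$ did for $C_7$.

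The per-root arithmetic of assembling these combinations is the routine part. The main obstacle I anticipate is the central roots — those at or near the apex of the fan $H$, where the farthest vertices sit at distance only $3$, the natural covering paths overlap heavily, and the combined bound comes closest to the target $16$. For those roots I expect to need a careful choice of paths and, as in the $C_7$ argument, the addition of inequalities coming from two oppositely directed paths, in order to exclude exactly $|D| = 17$ rather than merely $|D| > 16$. Once every root orbit has been checked in this way, the upper and lower bounds combine to give $c_r(G_1) = 16$.
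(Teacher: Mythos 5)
Your lower bound is fine and matches the paper's (Lemma~\ref{lower_bound} with the diameter-$4$ pair $a,e$). The gap is in the upper bound: the path-weight LP strategy that works for $C_7$ provably cannot work for $G_1$ when the root is $a$ or $e$, and those are exactly the hard roots. The reason is that Lemma~\ref{path_lemma} only supplies \emph{necessary} conditions for $r$-criticality, and there are $17$-pebble distributions satisfying every one of them. Concretely, take $r=e$ and $D$ with $16$ pebbles on $a$ and one pebble on $f$. The unique length-$4$ path from $e$ to $a$ is $e$-$d$-$c$-$b$-$a$, which misses $f$ and has induced weight exactly $1$; any path from $e$ containing both $a$ and $f$ places $a$ at distance at least $5$ along the path, so its induced weight is at most $\tfrac{16}{32}+\tfrac{1}{4}<1$; and every path avoiding $a$ has induced weight at most $\tfrac14$. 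So this $17$-pebble distribution is feasible for your entire system of inequalities (it is $r$-excessive, not $r$-critical, but Lemma~\ref{path_lemma} cannot detect that). By the usual duality argument, once a $17$-pebble point is feasible, no nonnegative combination of those inequalities with every coefficient at least $\tfrac{1}{16}$ can have right-hand sides summing to less than $\tfrac{17}{16}$, so the method can never certify $|D|\le 16$. (You can also see this directly: covering $a$ with coefficient $\tfrac{1}{16}$ forces all the dual weight onto the single geodesic $e$-$d$-$c$-$b$-$a$, which then leaves $f,g,h$ with coefficient $0$.) In $C_7$ you are saved because every vertex lies on one of exactly two root-paths that together cover the cycle; $G_1$ has ``spare'' vertices off the geodesic where an extra pebble hides from every tight path inequality.

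The paper's proof uses two different ideas that you would need. For roots inside the fan $H\cong F_5$, it pushes all pebbles from $a$ and $e$ into $H$, observes that at least $7$ survive out of $16$, and invokes $c_r(F_5)=5$ (Theorem~\ref{wheel_theorem}) to conclude such distributions are $r$-excessive. For the root $e$ (equivalently $a$), it uses a genuinely structural argument: any $r$-critical solution ends with the step $[d,e]$ from two pebbles on $d$; coloring those two pebbles red and blue and propagating the colors backwards through the solution partitions $D$ into distributions $R$ and $B$, each of which must be $r$-critical on $G_1-e$ rooted at $d$; after pebbling their $a$-pebbles to $b$, each becomes an $r$-critical distribution on $H$, so by Lemma~\ref{cases} each has at most $8$ pebbles, whence $|D|=|R|+|B|\le 16$. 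That pebble-splitting step is the missing idea; without it (or some comparably sharp criticality criterion, e.g.\ Lemma~\ref{critical_solution}), weight inequalities alone cannot separate $16$ from $17$.
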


\begin{proof}
By Lemma \ref{lower_bound}, $c_r(G_1) \geq 16$. We show that any
$r$-critical distribution on $G_1$ has at most 16 pebbles. Again,
we use the labeling system of Figure \ref{glasses}, and identify
$H$ as the induced subgraph of $G_1$ on the vertices
$\{b,c,d,f,g,h\}$.

\noindent \textbf{Case 1. $r$ is a vertex of $H$.}

Let $D$ be a rooted distribution on $G_1$ with 16 pebbles.  We
move as many pebbles as possible from $a$ and $e$ into $H$.  We
may leave at most 1 pebble on each of $a$ and $e$, leaving at
least 14 pebbles total.  At most half of these may be used moving
into $H$, leaving at least 7 pebbles on $H$. Because $c_r(H)=5$ by
Theorem \ref{wheel_theorem}, $D$ is $r$-excessive. Hence, there
are no $r$-critical distributions on $G_1$ with $r$ in $H$.
Without loss of generality, there is only one case remaining.

\noindent \textbf{Case 2. $r=e.$}

By definition, every $r$-critical distribution must have a
solution $S$ which leaves one pebble on $e$ and no pebble on any
other vertex. Let the rooted distributions arrived at after each
pebbling step of $S$ be $D_{15}$, $D_{14},$ $\ldots,$ $D_2$,
$D_1$.  We consider the pebbling steps in $S$ in reverse order.
The last pebbling step in $S$ must be $[d,e]$, from the rooted
distribution $D_2$ with two pebbles on $d$ and no pebble on any
other vertex to the rooted distribution $D_1$ with one pebble on
$e$.

We color one of the pebbles in $D_2$ red and the other blue.  For
each pebbling step before $[d,e]$, we identify the pebble produced
from the pebbling step as either red or blue, and color the two
input pebbles the same color.  Working back through these rooted
distributions in this way, we arrive at a coloring of the pebbles
in $D$.

We consider the rooted distributions $R$ and $B$ of only red and
blue pebbles, respectively.  Note that $\{R,B\}$ is a partition of
the distribution $D$ into not necessarily equal sizes.  The rooted
distributions $R$ and $B$ are each $r$-critical distributions on
$G_1-e$ with root $d$, since any non-$r$-critical solution of one
of these rooted distributions would result in a non-$r$-critical
solution of $D$.

Without loss of generality, a solution of $R$ can begin by
pebbling all of the pebbles on $a$ in $R$ to $b$, obtaining the
rooted distribution $R'$.  However, $R'$ is an $r$-critical
distribution on $H$, so it must have one of the forms given in
Lemma \ref{cases}. This implies that $R$ has at most 8 pebbles,
and the only form of $R$ with eight pebbles has all eight pebbles
on $a$.  This is also true for $B$.

Since $|R| \leq 8$ and $|B| \leq 8$, $|D| \leq 16$.  So the
largest $r$-critical distributions on $G_1$ have 16 pebbles, and
the only $r$-ceiling distributions on $G_1$ have 16 pebbles on
$a$. Consequently, $c_r(G_1)=16$.
\end{proof}

Note that since $d(G_1)=4$, $c_r(G_1)=2^d$.

\begin{theorem}The graph $G_1$ shown in Figure 7 is thrifty but not greedy.
\end{theorem}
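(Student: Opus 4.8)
The plan is to handle thriftiness and non-greediness separately, leaning on the two preceding lemmas about $G_1$.

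For thriftiness, I would invoke the weight characterization of Theorem~\ref{thrifty_weight}: it suffices to show $w(G_1)=1$. By Lemma~\ref{glasses_critical}, every $r$-ceiling distribution on $G_1$ consists of all $16$ pebbles on the vertex $a$ with root $e$ (up to the symmetry of $G_1$ exchanging $a$ and $e$), and $a$ lies at distance $d(G_1)=4$ from $e$. Hence such a distribution has weight $16/2^4=1$. Combined with the general lower bound $w(G_1)\geq 1$ from Corollary~\ref{weight_bound}, this gives $w(G_1)=1$, so $G_1$ is thrifty by Theorem~\ref{thrifty_weight}. Equivalently, one can simply exhibit the greedy solution $[a,b],[b,c],[c,d],[d,e]$ that repeatedly halves the pile down the shortest path from $a$ to $e$.

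For non-greediness, the plan is to produce a single solvable distribution with at least $p(G_1)=18$ pebbles that admits no greedy solution, namely the rooted distribution $D$ of Figure~\ref{glasses}, which has $18$ pebbles, root $e$, and exactly one pebble on $f$. By Lemma~\ref{glasses_pebbling}, $D$ is solvable but deleting the pebble on $f$ makes it unsolvable; therefore every solution of $D$ must eventually move the pebble on $f$. Since $D(f)=1$, such a move can only occur after a second pebble is brought onto $f$ by a pebbling step from a neighbor of $f$.

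The key structural observation, which I expect to be the crux, is that the neighbors of $f$ are exactly $b$ and $g$, at distances $3$ and $2$ from the root $e$, while $f$ itself is at distance $3$. Thus no vertex adjacent to $f$ is strictly farther from $e$ than $f$ is, so no greedy pebbling step can ever deposit a pebble on $f$. Consequently, in any greedy solution the lone pebble on $f$ is stranded and never moved, hence never used, which by the previous paragraph is impossible. Therefore $D$ has no greedy solution, and since $|D|\geq p(G_1)$, the graph $G_1$ is not greedy. The main obstacles are confirming from the labeling of Figure~\ref{glasses} that the only neighbors of $f$ are $b$ and $g$ with the stated distances to $e$, and citing Lemma~\ref{glasses_pebbling} for the essentiality of the pebble on $f$.
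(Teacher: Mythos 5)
Your proof is correct and follows essentially the same route as the paper: thriftiness from Lemma~\ref{glasses_critical}'s classification of the $r$-ceiling distributions (the paper just observes directly that $16$ pebbles on $a$ with root $e$ is greedy, which is equivalent to your weight computation via Theorem~\ref{thrifty_weight}), and non-greediness from the $18$-pebble distribution of Figure~\ref{glasses}. Where you go beyond the paper is worth noting: the paper merely asserts that this distribution is not greedy, whereas you actually prove it, and your argument --- every solution must move the lone pebble on $f$ because deleting it destroys solvability, yet no greedy step can ever deposit a second pebble on $f$ since no neighbor of $f$ is strictly farther from the root $e$ than $f$ itself --- is exactly the right justification. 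One small correction to check against the figure: $f$ is also adjacent to the hub vertex $c$ of the fan $H$, not only to its path-neighbors; this costs you nothing, since $c$ is strictly closer to $e$ than $f$ is, so the key claim that no neighbor of $f$ lies strictly farther from $e$ still holds.
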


\begin{proof}
By Lemma \ref{glasses_critical}, the only $r$-ceiling
distributions on $G_1$ have 16 pebbles at one of the two vertices
$a$ and $e$, and the root at the other vertex.  As these rooted
distributions are greedy, $G_1$ is thrifty.

However, the rooted distribution given on $G_1$ in Figure
\ref{glasses} has $p(G_1)$ pebbles and is not greedy.  Thus, $G_1$
is not greedy.
\end{proof}


\begin{figure}[ht!]
\begin{center}
\includegraphics[width=\textwidth]{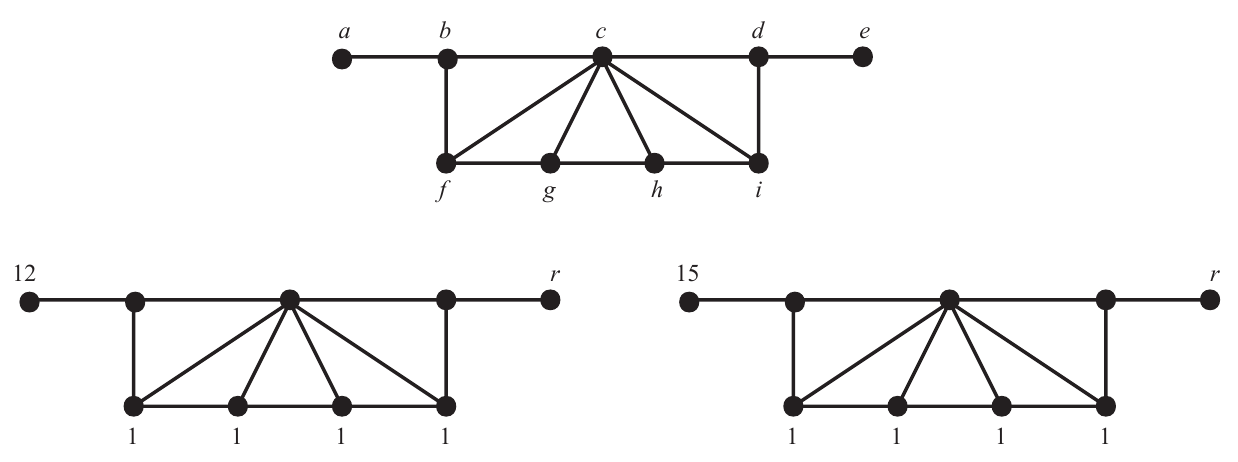}

\caption{A labeling and two non-greedy rooted distributions on the
graph $G_2$.} \label{goggles}
\end{center}
\end{figure}

\begin{lemma} The pebbling number of $G_2$ is 19. \label{pebbling_goggles}
\end{lemma}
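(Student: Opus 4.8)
The plan is to follow the template of Lemma~\ref{glasses_pebbling} almost verbatim, since $G_2$ is the ``goggles'' analogue of the ``glasses'' graph $G_1$. Using the labeling of Figure~\ref{goggles}, I would first locate a fan subgraph $H \cong F_k$ inside $G_2$ whose pebbling number is supplied by Theorem~\ref{broken_wheel_p}, and I would exploit the fact that, since such a fan has diameter $2$, it satisfies the $2$-pebbling property of \cite{Pachter95}: if $H$ carries $2p(H)-q+1$ pebbles spread over $q$ distinct vertices, then two pebbles can be delivered to any chosen vertex of $H$. These two facts about $H$ are the engine of the whole argument.

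For the upper bound $p(G_2) \le 19$, I would take an arbitrary distribution $D$ with $19$ pebbles and show it is solvable. As in the $G_1$ proof, I would first make several ``without loss of generality'' reductions: by the symmetry of the two outer arms I may assume $D$ places at least as many pebbles on one arm vertex as on the other and that this heavier arm vertex is not the root, and by the parity reduction (if an arm holds an even number of pebbles I may add one free pebble, since only pebbling steps \emph{out} of that arm are ever used and an even count delivers the same number of pebbles inward as the next odd count) I may assume each arm count has a convenient parity. I would then push as many pebbles as possible from the arm vertices into the central fan $H$, leaving at most one pebble stranded on each arm; charging the loss of one pebble to each inward pebbling step gives a guaranteed lower bound on the size of the resulting pile on $H$.

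The argument then splits into cases according to the location of the root and the exact number of pebbles on the arm vertex feeding the fan, paralleling Cases~1 through~5 of Lemma~\ref{glasses_pebbling}. When the root lies inside $H$, the guaranteed pile on $H$ already exceeds $p(H)$ and solvability is immediate. When the root is an outer vertex, I would move the arm's pebbles inward, apply $p(H)$ together with the $2$-pebbling property to deliver two pebbles to the neighbor of the root inside $H$, and then make the final step to the root; the $2$-pebbling property is exactly what handles the subcase in which the delivered pebbles are spread over several vertices of $H$ rather than concentrated on one. For the lower bound $p(G_2) \ge 19$, I would exhibit an unsolvable distribution of size $18$, the natural candidate being one of the two distributions drawn in Figure~\ref{goggles} (or such a distribution with a single pebble deleted, just as the pebble on $f$ was deleted in the $G_1$ argument), chosen so that the arm opposite the root cannot push enough weight across the fan.

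I expect the main obstacle to be the handful of threshold cases---those in which the pile funneled into $H$ is exactly at the edge of what $p(H)$ and the $2$-pebbling property guarantee, and the surviving pebbles land in an awkward configuration (the analogue of the $D(b,f,g,h)=(7,1,1,1)$ configuration in the $G_1$ proof). These must be verified by an explicit sequence of pebbling moves, and keeping the bookkeeping straight about which vertex of $H$ is to receive the two pebbles is the delicate part; once the fan subgraph and its $2$-pebbling property are in hand, the remainder of the case analysis is routine.
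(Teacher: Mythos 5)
Your proposal follows essentially the same route as the paper's proof: identify the fan subgraph $H \cong F_6$ with $p(H)=7$ via Theorem~\ref{broken_wheel_p}, make the same symmetry and parity reductions on the arm vertices, funnel pebbles into $H$ and split into cases by the root's location and the count on $a$, invoke the $2$-pebbling property for the spread-out subcases, and obtain the lower bound from an $18$-pebble unsolvable distribution derived from Figure~\ref{goggles} by deleting the pebble on $f$. The only work remaining is the explicit verification of the threshold configurations (the analogues of $D(b,f,g,h,i)=(7,1,1,1,1)$ and the $12$-pebbles-on-$H$ case), which you correctly flag and which the paper handles by routing extra pebbles onto the path $bcde$.
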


\begin{proof}
The proof is analogous to the proof of Lemma~\ref{glasses_pebbling}. We use the labeling system of $G_2$ given
in Figure \ref{goggles}.   As in the proof of Lemma~\ref{glasses_pebbling}, let $H$ be the induced subgraph of $G_1$ on the vertices $b$, $c$, $d$, $f$, $g$, $h$, and $i$. Note that $H \cong F_6$. We verify that every rooted distribution
on $G_2$ with 19 pebbles is $r$-solvable. 

Let $D$ be a distribution on $G_2$ with 19 pebbles.   Assume without loss of generality that $D(a) \geq D(e)$.  As in the proof of Lemma~\ref{glasses_pebbling}, we may assume that $a$ is not the root of $D$ and that $D(a)$ is odd.
There are five cases to consider:

\noindent \textbf{Case 1. The vertex $r$ is in $H$.} 
As in the proof of Lemma~\ref{glasses_pebbling}, moving as many pebbles as possible into $H$ yields at least 8 pebbles on $H$, and since $H$ has pebbling number 7 by Theorem \ref{broken_wheel}, the resulting distribution of at least 8 pebbles on $H$ is solvable.

\noindent \textbf{Case 2. $D(a) \geq 17.$} In this case, we may
pebble to any root from $a$, since $d(G_1)=4$.

\noindent \textbf{Case 3. $D(a)=15$ and $r=e.$}  In this case, we
perform the pebbling step $[a,b]$ 7 times, yielding at least 7
pebbles on $b$, and 11 pebbles total on $H$. If there is an
additional pebble on $b$, $c$, or $d$, or two pebbles on $f$, $g$, $h$, or $i$, the resulting distribution is solvable.  The
only case left is $D(b,f,g,h,i)=(7,1,1,1,1)$.  Then the rooted distribution is still $r$-solvable.

\noindent \textbf{Case 4. $D(a)=13$ and $r=e.$} 
We perform the pebbling step $[a,b]$ 6 times, yielding at least 6 pebbles on $b$,
and 12 pebbles total on $H$.  If at least 8 of these are on the path $bcde$ then the distribution is solvable to $e$.  If 7 pebbles are on this path then one of the vertices $f$, $g$, $h$, or $i$ must have two pebbles.  After the pebbling step from this vertex to $c$, the path $bcde$ has 8 pebbles again.  If $bcde$ has only 6 pebbles and one of the vertices $f$, $g$, $h$, or $i$ has 4 pebbles, we make two pebbling steps from this vertex to $c$, and again $bcde$ has 8 pebbles.  Finally, if none of these are the case, then at least three vertices in $H$ have pebbles on them.  Then since $H$ satisfies the 2-pebbling property, we may pebble two pebbles to $d$, and then one to $e$.

\noindent \textbf{Case 5. $D(a) \leq 11$ and $r=e.$}  We perform
the pebbling step $[a,b]$ as many times as possible, yielding at
least 13 pebbles total on $H$.  If all 13 pebbles are on any vertex, we may pebble from this vertex to $e$.  Otherwise, again by the 2-pebbling property, we can pebble two pebbles to
$d$, and one to $e$.
%

So $p(G_2) \leq 19$.  If we remove the pebble on $f$ from the
second rooted distribution in Figure \ref{goggles}, the resulting
rooted distribution is no longer $r$-solvable. So $p(G_2)=19$.
\end{proof}

\begin{lemma}The $r$-critical pebbling number of $G_2$ is 16. \label{critical_goggles}
\end{lemma}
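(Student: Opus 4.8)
The plan is to mirror the proof of Lemma~\ref{glasses_critical} almost verbatim, replacing the copy of $F_5$ by $F_6$ throughout and checking that the pebbling-number bookkeeping still closes. First I would record the ingredients. Since $d(G_2)=4$, Lemma~\ref{lower_bound}(ii) gives $c_r(G_2)\geq 2^4=16$, and the rooted distribution with $16$ pebbles on $a$ and root $e$ realizes this bound, so it remains only to show that no $r$-critical distribution on $G_2$ has more than $16$ pebbles. I use the labeling of Figure~\ref{goggles}, writing $H\cong F_6$ for the induced subgraph on $\{b,c,d,f,g,h,i\}$, with $a$ adjacent only to $b$ and $e$ adjacent only to $d$, and I recall from Theorems~\ref{broken_wheel_p} and~\ref{wheel_theorem} that $p(H)=7$ and $c_r(H)=6$.

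For Case 1, where the root $r$ lies in $H$, I would take any distribution $D$ with at least $16$ pebbles and move as many pebbles as possible from $a$ and $e$ into $H$ via the steps $[a,b]$ and $[e,d]$, leaving at most one pebble on each of $a$ and $e$. The same count as in Lemma~\ref{glasses_critical} shows that at least $7$ pebbles then lie on $H$. Since $7=p(H)$ the induced distribution on $H$ is $r$-solvable, and since $7>6=c_r(H)$ it carries more pebbles than any $r$-critical distribution on $H$, hence is $r$-excessive; therefore $D$ is $r$-excessive, and no $r$-critical distribution with $r\in H$ has $16$ or more pebbles.

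For Case 2, by the symmetry of $G_2$ exchanging $a$ and $e$ I may assume $r=e$, and I would run the coloring argument of Lemma~\ref{glasses_critical}. Any $r$-critical solution must end with the step $[d,e]$, so I color the two pebbles on $d$ red and blue and propagate the colors backward, splitting $D$ into rooted distributions $R$ and $B$ that are each $r$-critical on $G_2-e$ with root $d$ (by Lemma~\ref{critical_solution}, any non-$r$-critical solution of $R$ or $B$ would lift to one of $D$). Since $a$ is a leaf adjacent only to $b$, Corollary~\ref{even} forces $R(a)$ to be even, so a solution of $R$ may begin by emptying $a$ onto $b$, yielding a rooted distribution $R'$ on $H$ that is $r$-critical with root $d$. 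Lemma~\ref{cases} then constrains the shape of $R'$, and because $d(a,d)=3$ the largest $|R|$ arises by placing $8$ pebbles on $a$ (collapsing to the form~(iii) distribution of $4$ pebbles on $b$); in every admissible case $|R|\leq 8$, and likewise $|B|\leq 8$, so $|D|=|R|+|B|\leq 16$. Together with Case 1 this gives $c_r(G_2)\leq 16$, and with the lower bound $c_r(G_2)=16$.

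The step I expect to demand the most care is the bound $|R|\leq 8$ in Case 2: one must verify, using Lemma~\ref{cases} together with Corollary~\ref{even}, that no admissible form of $R'$ lifts back to an $r$-critical $R$ on $G_2-e$ with more than $8$ pebbles. This is the same verification as in Lemma~\ref{glasses_critical}, now over the larger fan $H\cong F_6$, and the only genuinely new point to confirm is that enlarging the fan introduces no new forms; it does not, since Lemma~\ref{cases} applies to any graph with a universal vertex and the relevant distances from $a$ are unchanged.
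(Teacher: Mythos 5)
Your proposal is correct and follows essentially the same route as the paper, which itself treats this lemma as a near-verbatim repeat of Lemma~\ref{glasses_critical}: the root-in-$H$ case via $p(F_6)=7$ and $c_r(F_6)=6$, then the red--blue coloring for $r=e$ giving $|R|,|B|\leq 8$ via Lemma~\ref{cases}. You correctly isolate the one point needing care (that form~(v) on the larger fan, which could in principle carry an extra single pebble, is still capped by $c_r(F_6)=6$ so that $|R|\leq 8$ survives the passage from $F_5$ to $F_6$), which the paper glosses over entirely.
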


\begin{proof}
The proof is analogous to the proof of Lemma
\ref{glasses_critical}.  The only $r$-critical distributions on
$G_2$ with 16 pebbles have $r=a$ or $r=e$.  We assume $r=e$ and
$D(e)=0$.  Every $r$-critical distribution of this form can be
colored red and blue so that the resulting red and blue rooted
distributions $R$ and $B$ are $r$-critical distributions on
$G_2-e$. Each of these rooted distributions can be solved by first
pebbling all of the pebbles from $a$ to $b$, resulting in rooted
distributions $R'$ and $B'$, which are $r$-critical distributions
on $H$.  By Lemma \ref{cases}, these rooted distributions must
take one of the five forms given in that lemma. Again, $R$ and $B$
can each have at most eight pebbles, and it follows that
$c_r(G_2)=16$.

Note that given the assumption $r=e$, $R$ must either have
$R(a)=8$ or $R(a,f,g,h,i)=(4,1,1,1,1)$, and equivalently for $B$.
It follows that the only $r$-critical distributions on $G_2$ with
16 pebbles must be a combination of two of these rooted
distributions. However, $D(a,f,g,h,i)=(8,2,2,2,2)$ is not
$r$-critical.  Therefore, the only $r$-critical distributions on
$G_2$ with 16 pebbles are either $D(a)=16$ or
$D(a,f,g,h,i)=(12,1,1,1,1)$.
\end{proof}

\begin{theorem}The graph $G_2$ shown in Figure 8 is not thrifty,
not greedy, and has $r$-critical pebbling number $2^d$, where $d$
is the diameter of $G_2$. \label{goggles_theorem}
\end{theorem}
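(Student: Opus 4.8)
The plan is to establish the three assertions in turn, leaning on Lemmas~\ref{pebbling_goggles} and~\ref{critical_goggles}, which already supply $p(G_2)=19$, $c_r(G_2)=16$, and the complete list of $r$-ceiling distributions, so that most of the computational weight is already carried. First I would record that $G_2$ has diameter $d=4$: the tail vertices $a$ and $e$ are joined only through the path $a,b,c,d,e$, realizing distance $4$, and since the fan $H\cong F_6$ has diameter $2$, no pair of vertices is farther apart. Combined with $c_r(G_2)=16$ from Lemma~\ref{critical_goggles}, this gives $c_r(G_2)=16=2^4=2^d$ immediately, so the third assertion requires no further work.

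For the claim that $G_2$ is \emph{not thrifty}, I would invoke the weight machinery of Section~3. By Lemma~\ref{critical_goggles} the rooted distribution $D$ with root $e$ and $D(a,f,g,h,i)=(12,1,1,1,1)$ is an $r$-ceiling distribution. Each fan vertex is adjacent to the hub $c$ with $d(c,e)=2$, so every one of the four isolated pebbles sits within distance $3$ of the root and contributes at least $2^{-3}$ to the weight, while the twelve pebbles on $a$ contribute $12/2^4=\tfrac34$. Hence $w(D)\ge \tfrac34+4\cdot\tfrac18=\tfrac54>1$. By Lemma~\ref{dist_weight} every greedy $r$-critical distribution has weight exactly $1$, so $D$ cannot be greedy; since $D$ is an $r$-ceiling distribution that fails to be greedy, $G_2$ is not thrifty (equivalently $w(G_2)>1$, and Theorem~\ref{thrifty_weight} applies).

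For the claim that $G_2$ is \emph{not greedy}, I would produce a solvable distribution with at least $p(G_2)$ pebbles that admits no greedy solution, namely the second rooted distribution of Figure~\ref{goggles}. It has $19=p(G_2)$ pebbles and is therefore $r$-solvable by Lemma~\ref{pebbling_goggles}; exhibiting a single distribution of size at least $p(G_2)$ that is not greedy suffices to show $G_2$ is not greedy, exactly paralleling the treatment of $G_1$. I would then argue that its pebbles, split across the two opposite tails and the fan, can be consolidated at the root only by routing one group against the distance gradient, so no solution consists solely of greedy steps.

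I expect the \emph{not greedy} step to be the only real obstacle: unlike the self-certifying weight argument for thriftiness, showing a distribution is not greedy requires excluding \emph{every} greedy solution rather than exhibiting one bad solution. Because greedy steps are exactly the distance-decreasing steps (and preserve weight by Lemma~\ref{step_weight}), I would argue by inspection that the witness places its pebbles so that no single greedy descent toward $r$ carries weight $1$ on its own, while the separate descents cannot merge without a distance-increasing step. This is a finite check of the same flavor as the non-greedy distributions used for $C_7$ and $G_1$, and I would read the explicit placement off Figure~\ref{goggles}.
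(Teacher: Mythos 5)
Your proposal follows the paper's proof essentially step for step: both obtain $c_r(G_2)=16=2^4=2^d$ from Lemma~\ref{critical_goggles} together with $d(G_2)=4$, both refute thriftiness by exhibiting a non-greedy $r$-ceiling distribution (the paper's first distribution in Figure~\ref{goggles} is exactly your $D(a,f,g,h,i)=(12,1,1,1,1)$ rooted at $e$), and both refute greediness by pointing to the second distribution in Figure~\ref{goggles}, which has $19=p(G_2)$ pebbles and no greedy solution. Your only departure is cosmetic but welcome: certifying non-greediness of the ceiling distribution via the weight bound $w(D)\ge \tfrac34+4\cdot\tfrac18=\tfrac54>1$ and Lemma~\ref{dist_weight} is a cleaner argument than the paper's appeal to inspection of the figure.
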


\begin{proof}
By Lemma \ref{critical_goggles}, $c_r(G_2)=16=2^d$, because
$d(G_2)=4$. The first rooted distribution in Figure \ref{goggles}
shows a non-greedy $r$-ceiling distribution on $G_2$, so $G_2$ is
not thrifty. The second rooted distribution in Figure
\ref{goggles} shows a non-greedy rooted distribution on $G_2$ with
19 pebbles, which is the pebbling number of $G_2$ by Lemma
\ref{pebbling_goggles}.
\end{proof}

\begin{figure}[ht!]
\begin{center}
\includegraphics{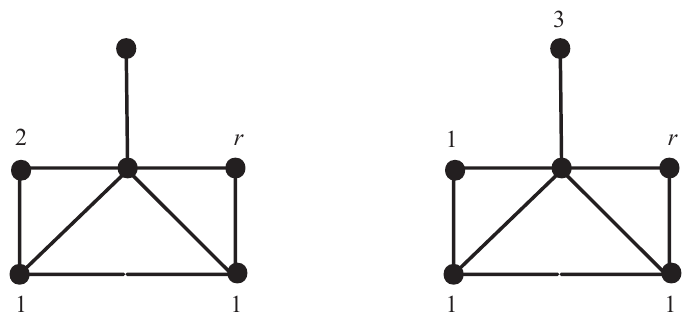}

\caption{An $r$-critical and an $r$-insufficient distribution on
the graph $G_3$.} \label{sting_ray}
\end{center}
\end{figure}

\begin{theorem}The graph $G_3$ shown in Figure \ref{sting_ray} is
greedy, not thrifty, and has $r$-critical pebbling number $2^d$.
\label{sting_ray_theorem}
\end{theorem}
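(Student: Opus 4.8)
The plan is to prove the three assertions — greedy, not thrifty, and $c_r(G_3)=2^d$ — after first nailing down the two numerical invariants the rest depends on. Reading the diameter $d$ off Figure~\ref{sting_ray}, I would compute $p(G_3)$ exactly as in Lemmas~\ref{glasses_pebbling} and~\ref{pebbling_goggles}: locate a fan-like subgraph $H$ whose pebbling number is supplied by Theorem~\ref{broken_wheel_p}, split into a short list of cases by the location of the root and the sizes and parities of the pebble piles on the vertices outside $H$, and in each case drive pebbles into $H$ and finish using the pebbling number or the $2$-pebbling property of $H$. The matching lower bound on $p(G_3)$ is witnessed by the $r$-insufficient distribution drawn in Figure~\ref{sting_ray}, which becomes unsolvable after one designated pebble is deleted.

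For $c_r(G_3)=2^d$ the lower bound is immediate from Lemma~\ref{lower_bound}(ii). For the upper bound I expect to reuse the weight argument of Theorem~\ref{cycle}: for any $r$-critical $D$, Lemma~\ref{path_lemma} applied along a spanning family of root-paths forces each induced path-distribution to have weight at most $1$; summing these inequalities with the weights $1/2^{d(v,r)}$ and comparing against the hypothesis $|D|>2^d$ should give a numerical contradiction, just as the two summed inequalities did for $C_7$. If instead the structure of $G_3$ is fan-based, the red/blue coloring-and-partition argument of Lemmas~\ref{glasses_critical} and~\ref{critical_goggles} (via the classification in Lemma~\ref{cases}) gives the same bound. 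Either way no $r$-critical distribution exceeds $2^d$ pebbles, so $c_r(G_3)=2^d$.

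The \emph{not thrifty} claim I would settle through the weight characterization. By Lemma~\ref{dist_weight} every greedy $r$-critical distribution has weight exactly $1$, so it suffices to exhibit a single $r$-ceiling distribution of weight strictly greater than $1$; the $r$-critical distribution shown in Figure~\ref{sting_ray} is the intended witness. I would check that it is minimally $r$-solvable with exactly $2^d$ pebbles and that $w(D)>1$, whence $w(G_3)>1$ and Theorem~\ref{thrifty_weight} yields that $G_3$ is not thrifty. There is no tension with greediness, since this witness carries only $2^d<p(G_3)$ pebbles.

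The \emph{greedy} claim is where the real work lies: I must show that \emph{every} distribution with at least $p(G_3)$ pebbles has a solution using only greedy steps, for every root. I would again organize by root position relative to $H$, noting that consolidating the outer piles inward is itself a sequence of greedy steps, and then appealing to the greedy behavior on fans for roots inside $H$, while for the remaining roots checking that the pebbles can be routed so that every step strictly decreases the distance to $r$ (tracking this via Lemma~\ref{step_weight} and Corollary~\ref{min_weight}). The main obstacle is the family of tight distributions sitting exactly at $p(G_3)$ pebbles, where the naive solution appears to want a sideways or away-from-root step; these boundary cases must be checked individually and constitute the crux of the argument.
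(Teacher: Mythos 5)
Your overall skeleton matches the paper's: the figure supplies a $6$-pebble $r$-insufficient distribution giving $p(G_3)\geq 7$, the greediness claim is settled by casework on distributions with exactly $p(G_3)$ pebbles, the lower bound $c_r(G_3)\geq 2^d$ comes from Lemma~\ref{lower_bound}, and non-thriftiness is witnessed by a non-greedy $r$-ceiling distribution from the figure (your weight-$>1$ reformulation of that witness is equivalent via Lemma~\ref{dist_weight} and Theorem~\ref{thrifty_weight}). Two of your structural guesses are off but not fatal: $G_3$ is a small diameter-$2$ graph with a dominating vertex, so the paper proves $p(G_3)=7$ and greediness by a short pigeonhole argument (no pebble on $r$, either some non-root vertex is empty or not, then either one vertex has $4$ pebbles or two vertices have $2$ each, and in every case a shortest path to $r$ works), not by the fan-subgraph machinery of Lemmas~\ref{glasses_pebbling} and~\ref{pebbling_goggles}.

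The genuine gap is the upper bound $c_r(G_3)\leq 2^d=4$, for which you offer two candidate methods and commit to neither; neither one is workable here. The global weight argument cannot succeed, because your own non-thriftiness witness is an $r$-ceiling distribution $D$ with $|D|=4$ and $w(D)>1$ (indeed $w(D)=3/2$): since each pebble contributes at least $1/2^d=1/4$ to the weight, a bound of the form $w(D)\leq 1$ is exactly what you would need to conclude $|D|\leq 4$, and it is false for $G_3$. The per-path version (Lemma~\ref{path_lemma} summed over a family of root-paths, as for $C_7$) would require a carefully chosen covering family with the right coefficients, and you do not exhibit one. The red/blue coloring of Lemmas~\ref{glasses_critical} and~\ref{critical_goggles} is tailored to graphs built from a fan with two attached vertices and does not transfer. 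The paper's actual argument is different and short: since $G_3$ has a vertex adjacent to all others, Lemma~\ref{cases} forces any $r$-critical distribution with more than $4$ pebbles to have form (v), namely two pebbles on one vertex $a$ and single pebbles on at least three further vertices $b,c,d$; criticality then forces the unique solution to be the chain $([a,b],[b,c],[c,d],[d,r])$, so these five vertices would induce a $P_5$ in $G_3$, and $G_3$ contains no induced $P_5$. Without this (or some replacement) step, your proposal does not establish $c_r(G_3)\leq 4$.
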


\begin{proof}
The second rooted distribution in Figure \ref{sting_ray} is an
$r$-insufficient distribution on $G_3$ with 6 pebbles.  This
implies that $p(G_3) \geq 7$.

Let $D$ be a rooted distribution on $G_3$ with 7 pebbles.  We show
that $D$ is $r$-solvable and greedy.  If $r$ has a pebble on it,
we are done.  If every vertex but $r$ has a pebble on it, then
there is one vertex $a$ with more than one pebble.  In this case,
we can pebble from $a$ to $r$ using a shortest path, so $D$ is
$r$-solvable and greedy.

Now suppose that there is a vertex $a$ other than $r$ with no
pebbles on it.  That leaves 4 vertices and 7 pebbles.  By the
pigeonhole principle, $D$ has either one vertex with at least 4
pebbles or two vertices $a$ and $b$ with at least 2 pebbles each.  In the first case, the 4 pebbles can be pebbled to $r$ along a shortest path.
In the second case, either $a$ or $b$ is adjacent to $r$ or both $a$ and $b$ are adjacent to a neighbor of $r$.  So again
$D$ is $r$-solvable and greedy.  Since every rooted distribution
on $G_3$ with 7 pebbles is $r$-solvable and greedy, $p(G_3)=7$ and
$G_3$ is greedy.

By Lemma \ref{cases}, any $r$-critical distribution $D$ on $G_3$
with more than 4 pebbles must have at least 2 pebbles on one
vertex and at least one pebble on at least 3 other vertices.  Call
the first vertex $a$ and the other three vertices $b$, $c$, and
$d$. For $D$ to be $r$-critical, the only solution of $D$ must be
$([a,b], [b,c], [c,d], [d,r])$. Then the induced subgraph
consisting of these five vertices must be a path. But there is no
induced $P_5$ in $G_3$. So $c_r(G_3) \leq 4$, and because
$d(G_3)=2$, $c_r(G_3)=4$. In particular, this implies that
$c_r(G_3)=2^d$.

The first rooted distribution $D$ shown in Figure \ref{sting_ray}
is $r$-critical and has four pebbles, and so $D$ is an $r$-ceiling
distribution. Therefore, since $D$ is not greedy, $G_3$ is not
thrifty.
\end{proof}

\begin{figure}[ht!]
\begin{center}
\includegraphics{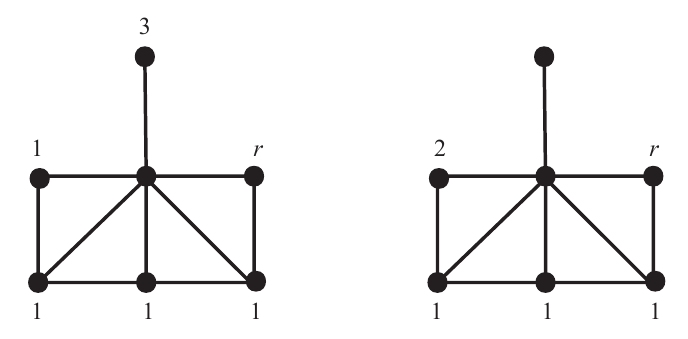}

\caption{An $r$-insufficient and an $r$-critical distribution on
the graph $G_4$.} \label{romulan}
\end{center}
\end{figure}

\begin{theorem}
The graph $G_4$ shown in Figure \ref{romulan} is greedy, has $c_r(G) > 2^d$, and is not thrifty.
\label{romulan_theorem}
\end{theorem}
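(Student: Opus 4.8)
The plan is to establish the three assertions in the order greedy, then $c_r(G_4) > 2^d$, then non-thrifty (the last being essentially free).

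First I would read off from Figure \ref{romulan} the diameter $d$ of $G_4$ and then pin down the pebbling number $p(G_4)$. Following the template of the $G_3$ argument in Theorem \ref{sting_ray_theorem}, a lower bound on $p(G_4)$ comes from the $r$-insufficient distribution displayed in the figure, while the matching upper bound comes from a case analysis showing that every distribution $D$ with $|D| = p(G_4)$ is $r$-solvable, and moreover solvable using only greedy steps. It suffices to treat $|D| = p(G_4)$, since a greedy solution of a subdistribution lifts to a greedy solution of any larger distribution; hence this single case analysis yields both the value of $p(G_4)$ and the greedy property at once. The pigeonhole argument used for $G_3$ is the model: in each case either $r$ already carries a pebble, or some vertex accumulates enough pebbles to reach $r$ along a shortest path, or two vertices can be combined while only moving toward $r$.

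Next, for $c_r(G_4) > 2^d$ I would exhibit the $r$-critical distribution $C$ shown in Figure \ref{romulan} and verify two things: that $C$ is minimally $r$-solvable and that $|C| > 2^d$. For minimality I would invoke Lemma \ref{critical_solution}, checking that every solution of $C$ is $r$-critical, i.e. leaves exactly one pebble on $r$ and none elsewhere; equivalently, that deleting any single pebble destroys $r$-solvability. Counting the pebbles of $C$ and comparing with $2^d$ then gives $c_r(G_4) \geq |C| > 2^d$.

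Finally, non-thriftiness is immediate. By Lemma \ref{lower_bound} we always have $c_r(G_4) \geq 2^d$, and Theorem \ref{thrifty_number} states that a thrifty graph of diameter $d$ satisfies $c_r = 2^d$. Since we have shown $c_r(G_4) > 2^d$, the graph cannot be thrifty. Equivalently, $C$ is an $r$-ceiling distribution that, having more than $2^d$ pebbles, cannot be greedy by Lemma \ref{dist_weight}, so it directly witnesses non-thriftiness. I expect the greedy half to be the main obstacle: proving greediness is strictly stronger than proving solvability, because each branch of the case analysis must be discharged using only toward-the-root moves, and a distribution that is solvable only via a non-greedy detour would break the argument. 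Getting the pebbling number exactly right, and confirming that the worst-case distributions --- those concentrating pebbles on vertices placed awkwardly relative to $r$ --- still admit purely greedy solutions, is where the real work lies and depends on the precise adjacency structure of $G_4$ read from the figure.
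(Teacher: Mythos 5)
Your proposal follows the paper's proof essentially step for step: the greedy claim via the pigeonhole case analysis on distributions of size $p(G_4)=8$ (with the $7$-pebble $r$-insufficient distribution from the figure supplying the lower bound $p(G_4)\geq 8$), the inequality $c_r(G_4)>2^d$ via the $5$-pebble $r$-critical distribution in the figure, and non-thriftiness as an immediate consequence of Theorem~\ref{thrifty_number}. The only difference is that the paper also proves the matching upper bound $c_r(G_4)\leq 5$ using Lemma~\ref{cases} and the absence of an induced $P_6$ in $G_4$, which pins down $c_r(G_4)=5$ exactly but is not needed for the statement as given.
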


\begin{proof} It suffices to show that $G_4$ is
greedy and has $r$-critical pebbling number greater than $2^d$; then by Theorem~\ref{thrifty_number}, $G_4$ is not thrifty.

The proof is analogous to the proof of Theorem
\ref{sting_ray_theorem}.  The first distribution in Figure
\ref{romulan} is an $r$-insufficient distribution on $G_4$ with 7
pebbles.  This implies that $p(G_4) \geq 8$.

Let $D$ be a rooted distribution on $G_4$ with 8 pebbles.  As
before, we can rule out the case in which $D$ has a pebble on $r$
and the case in which there is at least one pebble on every other
vertex. Consider the remaining case, in which there is a vertex
$a$ other than $r$ with no pebbles on it. That leaves 5 vertices
and 8 pebbles. By the pigeonhole principle, $D$ has either one
vertex with at least 4 pebbles or two vertices with at least 2
pebbles each. As before, every rooted distribution on $G_4$ with 8
pebbles is $r$-solvable and greedy, so $p(G_4)=8$ and $G_4$ is
greedy.

Again by Lemma \ref{cases}, any $r$-critical distribution $D$ on
$G_4$ with more than 5 pebbles must have 2 pebbles on one vertex
and one pebble on at least 4 other vertices.  By the same
reasoning as in Theorem \ref{sting_ray_theorem}, the induced
subgraph consisting of these six vertices must be a path. Since
there is no induced $P_6$ in $G_4$, $c_r(G_4) \leq 5$.  The second
rooted distribution $D$ shown in Figure \ref{romulan} is
$r$-critical and has five pebbles, so $c_r(G_4)=5$.  In
particular, $c_r(G_4)>4=2^d$, since $d(G_4)=2$.
\end{proof}

\begin{figure}[ht!]
\begin{center}
\includegraphics{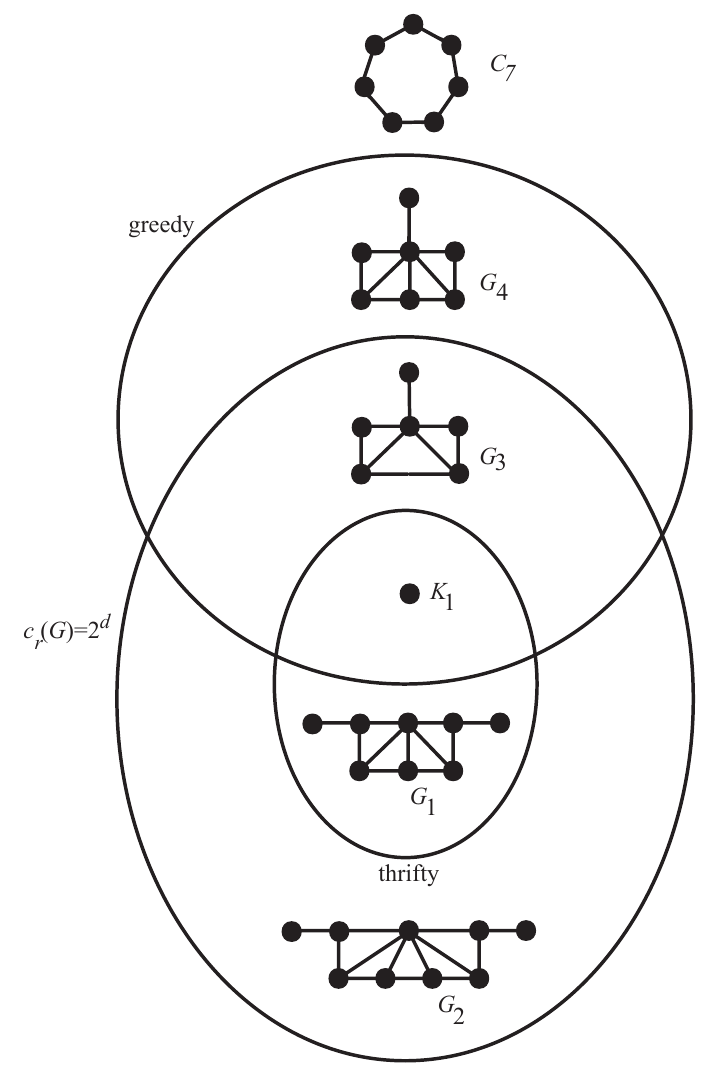}

\caption{The Venn diagram of rooted pebbling properties of
graphs.} \label{pebbling_classes}
\end{center}
\end{figure}

The above results are summarized in the Venn diagram in Figure
\ref{pebbling_classes}.  In each region of this Venn diagram, a
graph is shown with the given properties.

\begin{theorem}The fan $F_k$ has weight $\frac{k+1}{4}$. \end{theorem}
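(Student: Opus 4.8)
The plan is to reduce the weight of an $r$-ceiling distribution to a single integer parameter and then bound that parameter using the structural restrictions of Lemma~\ref{cases} and Theorem~\ref{wheel_theorem}. Write the path as $v_1v_2\cdots v_k$ with apex $x$ adjacent to every $v_i$, so that $d(F_k)=2$ and, by Theorem~\ref{wheel_theorem}, $c_r(F_k)=k$; hence every $r$-ceiling distribution has exactly $k$ pebbles. Since a pebble on the root is always removable, any $r$-critical $D$ has $D(r)=0$, and every non-root vertex lies at distance $1$ or $2$ from $r$, so $w(D)=\tfrac12 P_1+\tfrac14 P_2$, where $P_1,P_2$ count the pebbles at distance $1$ and $2$. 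As $P_1+P_2=k$, this collapses to $w(D)=(k+P_1)/4$, and the whole problem becomes maximizing the number $P_1$ of pebbles on a neighbor of $r$.

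First I would pin down the structure of $r$-ceiling distributions. By Lemma~\ref{cases}, a $k$-pebble $r$-critical distribution with $k\ge 4$ has two pebbles on a single vertex $v_b$ and at most one pebble on every other vertex; forms (iii) and (iv) arise only when $k=4$ and are dispatched directly, since each has weight at most $1<(k+1)/4$. Two observations, each proved by producing a non-$r$-critical solution and invoking Lemma~\ref{critical_solution}, prune the remaining possibilities: (a) if $x$ held a pebble, then $v_b\to x\to r$ would solve $D$ while leaving the other $k-3\ge 1$ pebbles unmoved, so $D(x)=0$; and (b) if a neighbor of $r$ held two pebbles it could fire straight to $r$, again with pebbles left over, so the doubled vertex $v_b$ is not a neighbor of $r$. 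A pebble count then forces the unique empty vertex to be exactly $x$, so every path vertex other than $r$ carries a pebble.

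The main obstacle is ruling out interior roots, because the bounds so far only give $P_1\le 2$ and hence the too-large value $(k+2)/4$. Here I would show that an interior root $r=v_j$ with $1<j<k$ can never be $r$-critical: with $v_b$ lying (say) in the right arm at path-distance at least $2$, the chain of pebbling steps $v_b\to v_{b-1}\to\cdots\to v_{j+1}\to v_j$ is a legal solution that never touches the nonempty left arm $v_1,\dots,v_{j-1}$, so it is a non-$r$-critical solution and $D$ is not $r$-critical by Lemma~\ref{critical_solution}. Therefore the root must be an end vertex, where it has a single path-neighbor; that neighbor carries at most one pebble by observation (b), so $P_1\le 1$ and $w(D)\le(k+1)/4$ for every $r$-ceiling distribution.

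Finally I would supply the matching lower bound with one explicit example. Take $r=v_1$, place two pebbles on $v_k$, one pebble on each of $v_2,\dots,v_{k-1}$, and none on $x$. Its only solution is the chain $v_k\to v_{k-1}\to\cdots\to v_1$, and deleting any pebble breaks this chain (the walk stalls at the first empty vertex, and rerouting through $x$ never accumulates two pebbles there), so it is minimally $r$-solvable with $k=c_r(F_k)$ pebbles, i.e.\ an $r$-ceiling distribution. A direct computation gives weight $\tfrac12+\tfrac{k-3}{4}+\tfrac12=(k+1)/4$, where the three terms come from $v_2$ at distance $1$, the $k-3$ singletons $v_3,\dots,v_{k-1}$ at distance $2$, and the doubled vertex $v_k$ at distance $2$. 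Together with the upper bound this yields $w(F_k)=(k+1)/4$.
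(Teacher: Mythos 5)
Your proof is correct and takes essentially the same approach as the paper: both reduce the weight to counting pebbles at distance $1$ versus distance $2$ from $r$, show at most one pebble can sit in $N(r)$, and exhibit the same extremal chain distribution of weight $\frac{k+1}{4}$. Your version is somewhat more careful, since the paper simply asserts the key claim that no $r$-critical distribution with more than two pebbles has more than one pebble in the neighborhood of $r$, whereas you justify it by eliminating pebbles on $x$, doubled neighbors of $r$, and interior roots via Lemma~\ref{critical_solution}.
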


\begin{proof}
By Theorem \ref{wheel_theorem}, $c_r(F_k)=k$.  Thus, the weight of
$F_k$ is the largest weight of an $r$-critical distribution $D$ on
$F_k$ with $k$ pebbles.  No $r$-critical distribution with more
than two pebbles can have more than one pebble in the neighborhood
of $r$. Since $d(F_k)=2$, this means that at least $k-1$ of the
$k$ pebbles in $D$ are distance 2 from $r$.  If all $k$ pebbles in
$D$ are distance 2 from $r$, then $w(D)=\frac{k}{4}$.  If exactly
$k-1$ pebbles in $D$ are distance 2 from $r$ then
$w(D)=\frac{1}{2}+\frac{k-1}{4}=\frac{k+1}{4}$.
%
The first diagram in Figure \ref{broken_wheel} shows an
$r$-critical distribution with $k$ pebbles and weight
$\frac{k+1}{4}$.
\end{proof}

\begin{corollary}There exist graphs with diameter 2 that have arbitrarily
large weight. \hfill $\square$ \end{corollary}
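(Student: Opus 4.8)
The plan is to read the result off directly from the theorem immediately preceding it, which computes the weight of the fan exactly. First I would recall the relevant structural fact: the fan $F_k$ is the path $P_k$ together with an apex vertex $x$ adjacent to every vertex of $P_k$. Since $x$ is adjacent to all path vertices, any two vertices of $F_k$ are joined by a path of length at most $2$ through $x$, so $d(F_k) = 2$ for all $k \geq 2$. As was already noted in the discussion following Theorem \ref{wheel_theorem}, this makes $\{F_k\}$ an infinite family of diameter-$2$ graphs, which is exactly the class in which I want to exhibit unbounded weight.

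Next I would invoke the theorem just proved, namely $w(F_k) = \frac{k+1}{4}$. The corollary is then the observation that this expression is strictly increasing in $k$ and tends to infinity: given any prescribed bound $W$, choosing any integer $k$ with $k > 4W - 1$ yields a graph $F_k$ with $d(F_k) = 2$ and $w(F_k) = \frac{k+1}{4} > W$. Hence no function of the diameter alone can bound the weight, and the weights attained by diameter-$2$ graphs are unbounded.

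There is essentially no obstacle here, since all the real work was carried out in computing $w(F_k)$; the only thing to check is that the formula grows without bound while the diameter is held fixed at $2$, which is immediate. This parallels the earlier remark that $c_r(F_k) = k$ shows $c_r(G)$ is not bounded above by any function of $d(G)$, and indeed the same family $\{F_k\}$ serves both purposes.
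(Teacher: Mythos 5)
Your proposal is correct and is exactly the argument the paper intends; the corollary is stated with an immediate $\square$ because it follows directly from $w(F_k)=\frac{k+1}{4}$ and $d(F_k)=2$, just as you say. (The only trivial quibble: $F_2$ is a triangle of diameter $1$, but this is irrelevant since the theorem applies for $k\geq 4$.)
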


This concludes our discussion of the $r$-critical pebbling number.
 We hope to explore the $g$-critical and $u$-critical pebbling
numbers further in future work.

\section*{Acknowledgements}

We thank Glenn Hurlbert for suggesting the $g$-critical pebbling
number to us, and Glenn Hurlbert, Aparna Higgins, and an anonymous referee for many helpful suggestions.

\bibliographystyle{amsplain}

\end{document}